\documentclass[12pt]{amsart} 
\usepackage{amsmath,amssymb,amsthm,graphicx}

\newtheorem{thm}{Theorem}[section]
\newtheorem{prop}[thm]{Proposition}
\newtheorem{lem}[thm]{Lemma}

\theoremstyle{definition}

\newtheorem{defn}[thm]{Definition}
\newtheorem{notation}[thm]{Notation}
\newtheorem{example}[thm]{Example}

\theoremstyle{remark}

\newtheorem{rem}[thm]{Remark}

\numberwithin{equation}{section}

\newcommand{\re}{\mathbb{R}}

\newcommand{\qu}{\mathbb{Q}}
\newcommand{\co}{\mathbb{C}}

\newcommand{\vv}{\mathcal{Z}}

\newcommand{\Bb}{\mathcal{B}}

\newcommand{\glb}{\mbox{\rm glb}}
\newcommand{\z}{\bar z}

\newcommand{\rp}{\mbox{Re}}
\newcommand{\ip}{\mbox{Im}}

\title[Constructing Bounded Rational Functions]{Constructing Discontinuous but Locally Bounded Rational Functions using \L ojasiewicz Inequalities}

\author{Adam Coffman}\thanks{ORCID 0000-0002-1437-7525}
\email{CoffmanA@pfw.edu \ \ \ \ \ \ \ Pan1@pfw.edu}
\author{Yifei Pan}

\address{Department of Mathematical Sciences \\ Purdue University Fort Wayne \\ 2101 E.\ Coliseum Blvd.\ \\ Fort Wayne, IN 46805 USA}

\subjclass[2020]{Primary 26C15; Secondary  14P10, 26B05.}

\begin{document}

\begin{abstract}
    For real multivariate polynomials $P$ and $Q$ both vanishing at a point, if the zero set of $Q$ is contained in the zero set of $P$, then there exists a rational function of the form $P^{p}/Q^{q}$ which is locally bounded and such that its extension that vanishes on the zero set of $Q$ is discontinuous.  The proof uses inequalities of \L ojasiewicz.
\end{abstract}

\maketitle

%

\section{Introduction and examples}\label{ir}

Real valued rational functions are familiar as counterexamples in multivariable calculus.  Functions with interesting or unexpected behavior are easily illustrated in the textbook and classroom by functions with two real inputs and one real output, and good choices of polynomial numerator and denominator.  We start with a few examples, with the goal of finding constructions for other bounded rational functions with discontinuities.
\begin{example}\label{ex1.1}
  The classic example $$f(x,y)=\frac{xy}{x^2+y^2}$$ is bounded but discontinuous, even when extended to have value $f(0,0)=0$, with different limit values along lines through the origin.  However its partial derivatives $\partial f/\partial x$ and $\partial f/\partial y$ exist at every point in $\re^2$.  Its graph in $\re^3$ is contained in the ruled surface defined by $z(x^2+y^2)-xy=0$ (known as ``Pl\"ucker's conoid,'' and of Type $8$ in the \cite{css} classification of real surfaces with quadratic rational parametrization).  See Figure 1 (left). 
  \begin{figure*}
    \includegraphics[scale=0.25]{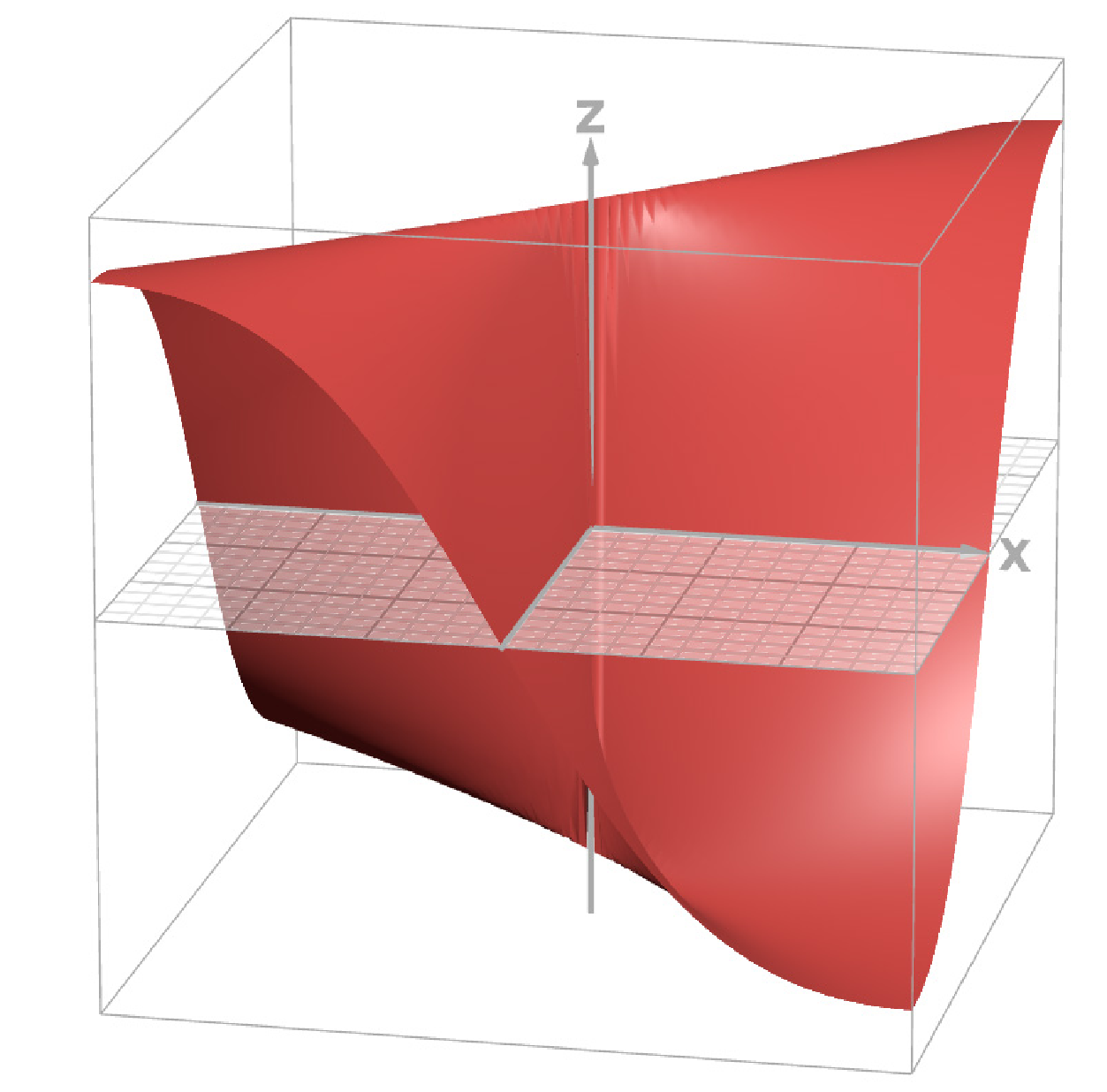} \ \ \ \ \ \ \  \includegraphics[scale=0.25]{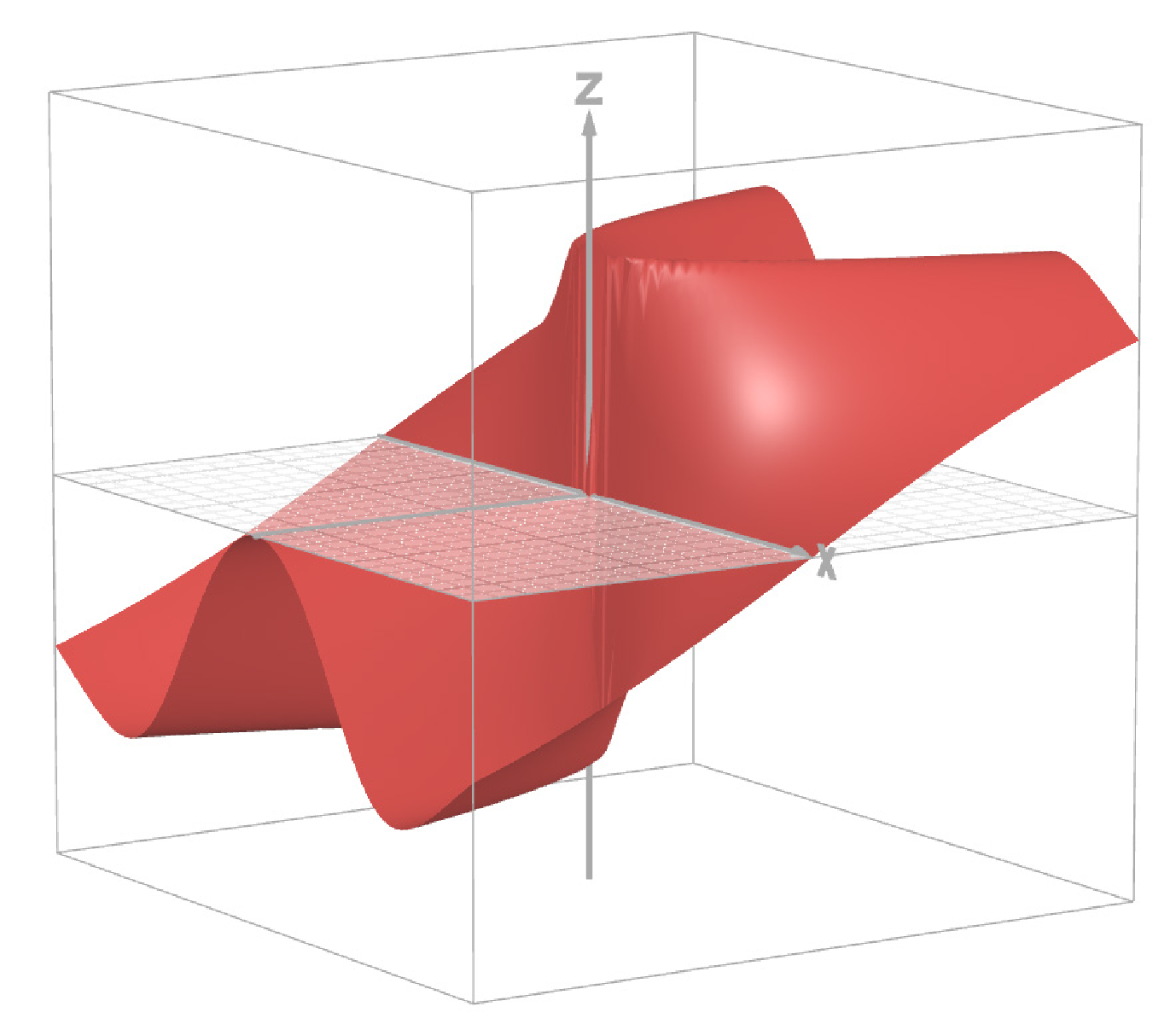}
    \caption{{$\displaystyle{z=\frac{xy}{x^2+y^2}}$.}  \ \ \ \ \ \ \ \ \ \ \ \ \ \ $\displaystyle{z=\frac{2x^2y}{x^4+y^2}}$.}\ \ \ \ \ \ \ \ \ \ \ \ \ \ \ \ \ 
  \end{figure*}
\end{example}
\begin{example}\label{ex1.2}
    The discontinuous function 
    \begin{equation}\label{eq1.2}
      f(x,y)=\frac{2x^2y}{x^4+y^2},
      \end{equation}
      again extended to $f(0,0)=0$, has limit $0$ as $(x,y)$ approaches $(0,0)$ along any straight line, but has limit $1$ along the curve $y=x^2$.  See Figure 1 (right).  The expression  (\ref{eq1.2}) is attributed to Genocchi and Peano in the survey paper \cite{cm}; for more examples with similar properties, see \cite{cm2} and \cite{cggr} Chapter 34. 
\end{example}
\begin{example}\label{ex1.3}
  The rational function (\cite{ib})
  \begin{equation}\label{eq1.3}
    f(x,y)=\frac{(x^2+y^2)^7}{(y^3-x^5)^2+(y-x^2)^8}
  \end{equation}
  is continuous  on a neighborhood of $(0,0)$ when extended to value $0$ at the origin, but not locally {\underline{Lipschitz continuous}} at $(0,0)$; there is no constant $C$ so that for $(x,y)$ near $(0,0)$, $|f(x,y)|\le C|(x,y)|$, but instead $f$ satisfies $f\approx x^{2/3}$ on the curve $y^3=x^5$.
\end{example}
In all these examples, $f$ is bounded near the origin even though the denominator approaches $0$.  In this article, we consider how local boundedness and continuity are related for rational functions of real variables.  Given polynomials $P$ and $Q$ with $P(\vec0)=Q(\vec0)=0$, we suppose that the zero set of $Q$ is contained in the zero set of $P$.  Then, for rational numbers $q/p\ge0$, the general idea is that $P/|Q|^{q/p}$, or equivalently $P^{2p}/Q^{2q}$, is unbounded for large $q/p$, but for small $q/p$, the rational function $P^{2p}/Q^{2q}$ has limit $0$ at $\vec0$ and extends to a continuous function near the origin.  Example \ref{ex1.1} and Example \ref{ex1.2} show there is an intermediate case, where the expression is locally bounded but its extension that vanishes on the zero set of $Q$ is not a continuous function.  Our main result is that this always happens: there is a rational exponent $p/q$ so that near $\vec0$, $P/|Q|^{q/p}$ is bounded but is not continuous in a neighborhood of the origin.  Our approach is elementary but at a few points we refer to \cite{bcr} for some foundational facts from the subject of real algebraic geometry, and the main results (Lemma \ref{lem3.3}, Theorem \ref{lem2.9}, and Theorem \ref{thm3.3}) rely on well-known inequalities of \L ojasiewicz.  The Proof of Theorem \ref{lem2.9} in fact shows that the existence of exponents so that $P^{2p}/Q^{2e}$ is bounded and discontinuous is equivalent to a strong version of the \L ojasiewicz inequality for pairs of polynomials $(P,Q)$.

In Section \ref{sec3}, the case where $Q$ is a power of the norm of the gradient $\vec\nabla P$ gives a simple way to construct a large class of locally bounded rational functions.  Although $Q=|\vec\nabla P|^2$ has isolated zeros for a generic polynomial $P$, Theorem \ref{ex4.11} shows how bounded rational functions can have an indeterminacy ($\frac00$) set with large dimension.

In Section \ref{sec4} we demonstrate another construction of bounded real rational functions, as the real or imaginary part of a complex valued expression.  This is another source of many examples and does not use the real algebraic geometry methods of Section \ref{sec2} or Section \ref{sec3}.

We remark that the related question of computing the value of a limit of a multivariable rational function is of continuing interest in the field of computer algebra, see \cite{a}, \cite{dgnp}, and their references.


\section{Elementary properties of rational functions}\label{rf}

We will work with real valued functions depending on $n$ real variables  $\vec x=(x_1,\ldots,x_n)$.  To fix notation we start by recalling some elementary properties of real polynomial and rational functions.

\begin{notation}\label{not1.2}
  The {\underline{zero set}} of a polynomial $P=P(x_1,\ldots,x_n)$ is denoted $$\vv(P)=\{\vec x:P(\vec x)=0\}\subseteq\re^n.$$ 
  We will also call a set of this form a {\underline{real algebraic set}}, and recall that the intersection of finitely many zero sets $\vv(P_1)\cap\ldots\cap\vv(P_N)$ is itself the zero set of one polynomial $P_1^2+\cdots+P_N^2$. 
\end{notation}
\begin{defn}\label{def1.2}
A {\underline{nonsingular}} point of a polynomial $P$ is $\vec x$ such that the gradient vector at $\vec x$ is non-zero: $$\vec\nabla P(\vec x)=\left.\left(\frac{\partial P}{\partial x_1},\ldots,\frac{\partial P}{\partial x_n}\right)\right|_{\vec x}\ne\vec0.$$
A point $\vec x_0$ such that $\vec\nabla P(\vec x_0)=\vec0$ is a {\underline{critical point}} of $P$.
\end{defn}  
At a point which is a nonsingular point of $P$  and also an element of $\vv(P)$, there is a neighborhood $U$ of $\vec x$ so that $U\cap\vv(P)$ is a smooth submanifold of dimension $n-1$ in $\re^n$. 
\begin{defn}\label{def1.3}
  A {\underline{rational function}} is defined as an expression of the form $$f(x_1,\ldots x_n)=\frac{P(x_1,\ldots,x_n)}{Q(x_1,\ldots,x_n)},$$
  where $P$ and $Q$ are polynomials and $Q\not\equiv0$.
  The {\underline{domain}} of a rational function $f=P/Q$ is the complement of $\vv(Q)$ in $\re^n$, and the {\underline{indeterminacy set}} of $f=P/Q$  is defined by $\vv(P)\cap\vv(Q)$.
\end{defn}
\begin{defn}\label{def2.4}
  For a subset $\Delta\subseteq\re^n$, a rational function $f=P/Q$ is {\underline{bounded}} on $\Delta$ means that there is some constant $M$ so that for all $\vec x$ in $\Delta\setminus\vv(Q)$, $$\left|\frac{P(\vec x)}{Q(x)}\right|\le M.$$  The function is {\underline{locally bounded}} near a point $\vec x$ means that it is bounded on some open neighborhood of $\vec x$.
\end{defn}
A rational function will not be locally bounded near a point where the denominator is zero, unless the numerator is also zero, as shown by the following Lemma.
\begin{lem}\label{lem2.5}
  If a rational function $P/Q$ is bounded on some open set $\Delta$, then $\Delta\cap\vv(Q)\subseteq\Delta\cap\vv(P)$.
\end{lem}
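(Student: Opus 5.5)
We argue by contradiction, using only the continuity of polynomials and the openness of $\Delta$. Suppose $P/Q$ is bounded on $\Delta$ by a constant $M$, so that $|P(\vec x)|\le M|Q(\vec x)|$ for every $\vec x\in\Delta\setminus\vv(Q)$. Fix $\vec x_0\in\Delta\cap\vv(Q)$; the goal is to show $P(\vec x_0)=0$.

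The key step is to approximate $\vec x_0$ by points of $\Delta$ outside $\vv(Q)$. Since $Q\not\equiv0$, the zero set $\vv(Q)$ has empty interior in $\re^n$: a polynomial vanishing on a nonempty open set vanishes identically. This follows by induction on $n$, because a one-variable polynomial with infinitely many roots is zero, and we then apply this coefficient by coefficient on a small product of intervals. Since $\Delta$ is open, every ball $B(\vec x_0,1/k)\cap\Delta$ is a nonempty open set, so it contains a point $\vec x_k\notin\vv(Q)$. This gives a sequence $\vec x_k\to\vec x_0$ in $\Delta\setminus\vv(Q)$.

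Now pass to the limit in the inequality. We have $|P(\vec x_k)|\le M|Q(\vec x_k)|$ for each $k$, and by continuity of polynomials $|P(\vec x_0)|\le M|Q(\vec x_0)|=0$. Hence $\vec x_0\in\vv(P)$, which proves $\Delta\cap\vv(Q)\subseteq\Delta\cap\vv(P)$.

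The only step that is not purely formal is that $\vv(Q)$ has empty interior, which is a standard fact. If $\Delta\cap\vv(Q)$ is empty, the statement is vacuous and there is nothing to prove.
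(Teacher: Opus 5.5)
Your proposal is correct and follows essentially the same route as the paper: you approximate $\vec x_0$ by a sequence in $\Delta\setminus\vv(Q)$, which exists because $\vv(Q)$ has empty interior, and pass to the limit using $|P|\le M|Q|$ and the continuity of $P$ and $Q$. The differences are cosmetic: you prove the empty-interior fact by induction on $n$ where the paper cites it from real algebraic geometry, and although you announce an argument by contradiction, what you actually give is a direct proof.
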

\begin{proof}
      For any point $\vec x\in\vv(Q)$, $\vec x$ is not an interior point of $\vv(Q)$ (\cite{bcr} Proposition 2.8.4).  If $\Delta\cap\vv(Q)$ is empty, then the claim is trivial; otherwise, for $\vec x\in\Delta\cap\vv(Q)$ there is some sequence of points $\vec x_i\in\Delta\setminus\vv(Q)$ converging to $\vec x$. This limit exists: 
  \begin{equation}\label{eq1}
    \lim_{i\to\infty} P(\vec x_i)=\lim_{i\to\infty}\left(\frac{P(\vec x_i)}{Q(\vec x_i)}\right)Q(\vec x_i)=0,
    \end{equation}
    since it is the product of a bounded sequence and a sequence with limit $0$.  By continuity of $P$, $P(\vec x)=0$, which shows $\Delta\cap\vv(Q)\subseteq\Delta\cap\vv(P)$.
\end{proof}

Example \ref{ex1.1} and Example \ref{ex1.2} showed that locally bounded rational functions may be discontinuous at points where the denominator is zero, but the following Lemma shows that multiplying by any small power of the denominator gives a continuous (but not necessarily rational) function.


\begin{lem}\label{lem3.4}
  Let $\Delta$ be any open set in $\re^n$.  If the rational function $P(\vec x)/Q(\vec x)$ is bounded on $\Delta$, then for any $\tau>0$, the function   $$S(\vec x)=\left\{\begin{array}{ll}{\displaystyle{\frac{P(\vec x)}{|Q(\vec x)|^{1-\tau}}}} & \mbox{\rm{for} $\vec x\in\Delta\setminus\vv(Q)$}\\ & \\0& \mbox{\rm{for} $\vec x\in\Delta\cap\vv(Q)$}\end{array}\right.$$
 is continuous on $\Delta$.
\end{lem}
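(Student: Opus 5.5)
The plan is to split $\Delta$ into the open set $\Delta\setminus\vv(Q)$ and the set $\Delta\cap\vv(Q)$, and check continuity of $S$ at points of each.

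At a point $\vec x_0\in\Delta\setminus\vv(Q)$ there is nothing to do. The set $\Delta\setminus\vv(Q)$ is open because $Q$ is continuous. On it $S=P/|Q|^{1-\tau}$ is a quotient of continuous functions with nonvanishing denominator, since $t\mapsto|t|^{1-\tau}$ is continuous. So $S$ is continuous at $\vec x_0$.

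The substantive case is a point $\vec x_0\in\Delta\cap\vv(Q)$, where $S(\vec x_0)=0$. Let $M$ be the bound from Definition \ref{def2.4}. The key step is the pointwise estimate
$$|S(\vec x)|\le M\,|Q(\vec x)|^{\tau}\quad\mbox{for all }\vec x\in\Delta .$$
For $\vec x\notin\vv(Q)$ this follows by writing $P/|Q|^{1-\tau}=(P/Q)\cdot Q/|Q|^{1-\tau}$, whose absolute value is $|P/Q|\,|Q|^{\tau}\le M|Q|^{\tau}$. For $\vec x\in\vv(Q)$ both sides are $0$.

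Since $Q$ is continuous and $Q(\vec x_0)=0$, and $\tau>0$, the right side $M|Q(\vec x)|^{\tau}$ tends to $0$ as $\vec x\to\vec x_0$. Squeezing then gives $S(\vec x)\to0=S(\vec x_0)$. Lemma \ref{lem2.5} is not actually needed for this argument. It only confirms the consistency of the setup, namely that $P$ vanishes on $\Delta\cap\vv(Q)$.

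There is no real obstacle here. The one point needing care is to use the case definition of $S$ on $\vv(Q)$ when writing the bound, so that the squeeze applies uniformly to nearby points whether or not they lie in $\vv(Q)$.
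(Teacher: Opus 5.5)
Your proof is correct and is essentially the paper's argument: continuity on the open set $\Delta\setminus\vv(Q)$ by elementary calculus, and at points of $\Delta\cap\vv(Q)$ the estimate $|S|=|P/Q|\,|Q|^{\tau}\le M|Q|^{\tau}\to0$, which the paper writes out as an explicit $\epsilon$--$\delta$ argument. One small precision: for $\tau>1$ the map $t\mapsto|t|^{1-\tau}$ is continuous only for $t\neq0$, but that is all your argument uses there.
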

\begin{proof}
    $P/|Q|^{1-\tau}$ is continuous by elementary calculus at points in the open set $\Delta\setminus\vv(Q)$.  $|Q|^\tau$ is continuous on $\re^n$, and if $Q$ is always nonnegative, or $\tau=p/q$ is rational with $q$ odd, then we do not need the absolute value in the statement or proof.  
    
    Let $M>0$ be a bound so that $|P(\vec x)/Q(\vec x)|\le M$ for $\vec x\in\Delta\setminus\vv(Q)$, and let $\epsilon>0$.  We denote the usual norm of a vector by $|(x_1,\ldots,x_n)|=\sqrt{x_1^2+\cdots x_n^2}$.  If $\vec x_0\in\Delta\cap\vv(Q)$, then there is some $\delta>0$ so that if $|\vec x-\vec x_0|<\delta$, then $\vec x\in\Delta$ and $|Q(\vec x)|^\tau<\epsilon/M$.  For $\vec x\in\vv(Q)$, $S(\vec x)=S(\vec x_0)=0$.  If $\vec x\not\in\vv(Q)$ and $|\vec x-\vec x_0|<\delta$, then $|S(\vec x)-S(\vec x_0)|=(|P(\vec x)|/|Q(\vec x)|)\cdot|Q(\vec x)|^\tau<M\cdot(\epsilon/M)=\epsilon$.

    We note that this proof only used the continuity of $P$ and $Q$, not the assumption that they are polynomials.
\end{proof}

The following Proposition states, as mentioned in the Introduction, that for any non-zero rational function $P/Q$, replacing $Q$ with some large power gives an unbounded rational function $P/Q^{\xi}$.
\begin{prop}\label{lem2.7}
  For an open set $\Delta\subseteq\re^n$, if there is a point $\vec x_0\in\Delta$ and a polynomial $Q\not\equiv0$ with $Q(\vec x_0)=0$, then for any polynomial $P\not\equiv0$ there exists a number $\xi_0>0$ so that the function $\displaystyle{\frac{P(\vec x)}{|Q(\vec x)|^\xi}}$ is not bounded on $\Delta\setminus\vv(Q)$ for any $\xi\ge\xi_0$.
%
\end{prop}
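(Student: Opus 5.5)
The plan is to reduce to one variable by restricting to a suitable line segment through a point where $Q$ vanishes but $P$ does not. Let $m=\deg P$ and $k=\deg Q$ (both $\ge0$; note $k\ge1$ since $Q(\vec x_0)=0$ and $Q\not\equiv0$). I will show that $\xi_0=m+1$ works.

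\textbf{Step 1: find a good point.} I look for a point $\vec y\in\Delta\cap\vv(Q)$ near which $P$ is not identically zero. Since $P\not\equiv0$, the set $\vv(P)$ has empty interior (\cite{bcr} Proposition 2.8.4, as in the proof of Lemma \ref{lem2.5}). Take a small ball $B\subseteq\Delta$ around $\vec x_0$ and choose $\vec z\in B\setminus(\vv(P)\cup\vv(Q))$; this set is nonempty because the union of two zero sets is the zero set of $PQ\not\equiv0$. Then consider the line $L(t)=\vec x_0+t(\vec z-\vec x_0)$. The one-variable polynomials $g(t)=Q(L(t))$ and $h(t)=P(L(t))$ are both nonzero, since $g(1)\ne0$ and $h(1)\ne0$, and $g(0)=0$. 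So $g$ has a root at $t=0$.

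\textbf{Step 2: one-variable estimate.} Near $t=0$ write $g(t)=t^a u(t)$ and $h(t)=t^b v(t)$ with $a\ge1$, $b\ge0$, and $u(0)\ne0$, $v(0)\ne0$. Here $b\le\deg h\le m$. For $t\ne0$ small, the segment stays in $\Delta$ and $g(t)\ne0$, and
\[
\frac{|P(L(t))|}{|Q(L(t))|^\xi}=|t|^{b-a\xi}\,\frac{|v(t)|}{|u(t)|^\xi}.
\]
The factor $|v|/|u|^\xi$ tends to $|v(0)|/|u(0)|^\xi>0$. If $\xi\ge m+1$ then $a\xi\ge\xi>m\ge b$, so the exponent $b-a\xi$ is negative and the expression tends to $\infty$ as $t\to0$. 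Hence $P/|Q|^\xi$ is unbounded on $\Delta\setminus\vv(Q)$.

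\textbf{Main obstacle.} The only delicate step is guaranteeing that the restriction of $P$ to the chosen line is not identically zero while $Q$ still vanishes on it. Anchoring the line at $\vec x_0$ and passing it through a point $\vec z$ off $\vv(PQ)$ settles this, using only the empty-interior fact. The uniformity in $\xi$ also matters: the bound $b\le m$ is independent of the chosen line, so $\xi_0=\deg P+1$ works for every $\xi\ge\xi_0$ simultaneously.
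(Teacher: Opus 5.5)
Your proof is correct. The paper gives no proof in its text and defers to the Addendum \cite{cp} for a ``brief and elementary'' argument, so a line-by-line comparison is not possible here. Your approach is the natural elementary one: restrict to the line through $\vec x_0$ and a point off $\vv(PQ)$, then compare orders of vanishing at $t=0$ ($a\ge1$ for $Q$ versus $b\le\deg P$ for $P$). It also yields the explicit threshold $\xi_0=\deg P+1$; indeed any $\xi>\deg P$ works.
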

\begin{proof}   
  We refer to the Addendum \cite{cp} for the brief and elementary proof.
%
%
\end{proof}

\section{Recalling some real algebraic geometry}

\begin{notation}\label{not4.1}
  For $r>0$ and $\vec x_0\in\re^n$, let $B_r(\vec x_0)$ denote the Euclidean open ball in $\re^n$ with center $\vec x_0$ and radius $r$.  This will often be abbreviated $B_r$.  The closed ball is denoted $\overline B_r(\vec x_0)=\overline B_r$.
\end{notation}

Definition \ref{def1.3} allows the numerator and denominator, $P$ and $Q$, of a rational function to have a common polynomial factor $F$, and the zero set of such a common factor is part of the indeterminacy set of $P/Q$.  In fact, this is related to the earlier observations about dimension and the gradient.


\begin{prop}\label{prop2.4}
  For a rational function $P/Q$ with $P\not\equiv0$, the following are equivalent.
   \begin{enumerate}
      \item There exist polynomials $M_1$, $M_2$, and $F$, so that $F$ is irreducible, $\vv(F)$ has a non-singular point of $F$, and so that $P=M_1F$, and $Q=M_2F$.
      \item There is some point $\vec x$ in the indeterminacy set of $P/Q$ where the intersection of the set with a ball $B_r(\vec x)$ is a smooth submanifold of the ball, with dimension $n-1$.
  \end{enumerate}
  If, in addition, for every point $\vec x\in\vv(Q)$, $P/Q$ is bounded on some neighborhood of $\vec x$, then $(1)$ and $(2)$ are equivalent to:
  \begin{enumerate}\setcounter{enumi}{2}
      \item There is some point $\vec x\in\vv(Q)$ where the intersection $\vv(Q)\cap B_r(\vec x)$ is a smooth submanifold of the ball, with dimension $n-1$.
  \end{enumerate}
\end{prop}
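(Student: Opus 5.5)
The plan is to prove $(1)\Rightarrow(2)$, $(2)\Rightarrow(1)$, and then, under the local boundedness hypothesis, $(2)\Rightarrow(3)$ and $(3)\Rightarrow(2)$. The tool throughout is the standard real algebraic geometry fact from \cite{bcr} (the real Nullstellensatz for principal ideals generated by an irreducible polynomial with a nonsingular real zero; \cite{bcr} Theorem 4.5.1 and Section 3.3): if $F$ is irreducible and $\vv(F)$ contains a nonsingular point of $F$, then any polynomial $G$ vanishing on $\vv(F)$, or even on a nonempty open subset of the smooth hypersurface part of $\vv(F)$, is divisible by $F$. The reason is that such a hypersurface piece is Zariski dense in $\vv(F)$, which has dimension $n-1$, and the ideal of $\vv(F)$ is then $(F)$.

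For $(1)\Rightarrow(2)$, take a nonsingular point $\vec x$ of $F$ with $F(\vec x)=0$. By the remark after Definition \ref{def1.2}, $\vv(F)\cap B_r(\vec x)$ is a smooth $(n-1)$-manifold. Since $\vv(F)\subseteq\vv(P)\cap\vv(Q)$, the indeterminacy set contains this piece. We need equality near some point. The indeterminacy set is a real algebraic set, so near a generic point of the smooth piece it either equals $\vv(F)$ or contains extra parts. If it were strictly larger in every ball, then, since $\vv(P)\cap\vv(Q)\ne\re^n$ (because $P\not\equiv0$ and zero sets have no interior), dimension theory (semialgebraic stratification, \cite{bcr} Chapter 9) gives that the extra part has closure meeting $\vv(F)$ in a set of dimension $<n-1$. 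So we can shrink to a point $\vec x$ of the smooth piece away from it, where the indeterminacy set coincides with $\vv(F)$.

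For $(2)\Rightarrow(1)$, let $N=(\vv(P)\cap\vv(Q))\cap B_r(\vec x)$ be the smooth hypersurface piece. Its Zariski closure is an algebraic set of dimension $n-1$, so it has an irreducible component $\vv(F)$ of dimension $n-1$ containing an open part of $N$. By \cite{bcr} (Theorem 4.5.1 and the discussion of hypersurfaces), this component can be taken as $\vv(F)$ with $F$ irreducible and with a nonsingular point of $F$ in the smooth part; choosing $F$ as a generator of the prime ideal of this component and using the Jacobian criterion gives this. Because $P$ and $Q$ vanish on this Zariski dense piece, the divisibility fact yields $F\mid P$ and $F\mid Q$.

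For the third condition, assume local boundedness at each point of $\vv(Q)$. By Lemma \ref{lem2.5}, $\vv(Q)\subseteq\vv(P)$ locally, hence globally. So $\vv(P)\cap\vv(Q)=\vv(Q)$, and $(2)$ and $(3)$ become literally the same statement. The main obstacle is the $(1)\Rightarrow(2)$ step: ruling out that the indeterminacy set is everywhere larger than $\vv(F)$ near its smooth points. I would handle it via the dimension count above, choosing the point in the complement of the closure of the higher-codimension extra components.
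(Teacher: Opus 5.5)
Your proposal is correct. For the part under the local boundedness hypothesis it is exactly the paper's argument: Lemma~\ref{lem2.5} gives $\vv(Q)\subseteq\vv(P)$, so the indeterminacy set is $\vv(Q)$, and (2) and (3) say the same thing.

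For $(1)\iff(2)$ the paper gives no argument in the text. It cites [dfp] and its Addendum, with only the heuristic that $\vv(P)$ and $\vv(Q)$ meet in lower dimension unless $P$ and $Q$ share a factor. You instead derive it from standard facts in \cite{bcr}:
\begin{itemize}
\item For $(1)\Rightarrow(2)$, you use the frontier inequality $\dim(\overline A\setminus A)<\dim A$ from Section 2.8 of \cite{bcr}. No stratification is needed. Apply it to $A=(\vv(P)\cap\vv(Q))\setminus\vv(F)$, which has dimension $\le n-1$ because it has empty interior; that, not merely $\vv(P)\cap\vv(Q)\ne\re^n$, is what is used. Then $\overline A\cap\vv(F)$ has dimension $\le n-2$ and misses some nonsingular zero of $F$.
\item For $(2)\Rightarrow(1)$, you use Zariski closure, an $(n-1)$-dimensional irreducible component, and divisibility.
\end{itemize}

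The paper's route is shorter; yours is self-contained modulo textbook facts. It also shows that $P\not\equiv0$ plays no role, since the indeterminacy set already lies in $\vv(Q)$ with $Q\not\equiv0$.

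Two steps you only gesture at deserve a line each.
\begin{itemize}
\item The ideal of an irreducible algebraic set of dimension $n-1$ is a height-one prime of the UFD $\re[\vec x]$, hence principal with an irreducible generator $F$. Alternatively, take $F$ to be an irreducible factor of $Q$ whose zero set contains a nonempty open piece of your manifold; such a factor exists by a Baire or dimension argument. Then the real Nullstellensatz for an irreducible polynomial with $(n-1)$-dimensional zero set says the polynomials vanishing on $\vv(F)$ are exactly the multiples of $F$.
\item $F$ has a nonsingular zero. Otherwise $\vec\nabla F$ would vanish on that open piece, hence on its Zariski closure $\vv(F)$. Then each $\partial F/\partial x_j$ would be a multiple of $F$, hence $0$ by degree, making $F$ constant.
\end{itemize}
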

\begin{proof}[Sketch]
  For the claim $(1)\iff(2)$, we refer to \cite{dfp} and to supplemental notes in our Addendum \cite{cp}.  We refer to the first few chapters of \cite{bcr} for a general detailed development of algebraic and topological notions of dimension in the subject of real algebraic geometry.  The rough idea of the Proposition is that $\vv(P)$ and $\vv(Q)$ generally have a lower-dimensional intersection unless $P$ and $Q$ have a common factor.

  To show $(2)\iff(3)$ under the boundedness assumption, for any point $\vec x\in\vv(Q)$, there is some neighborhood $\Delta$ where $\Delta\cap\vv(Q)\subseteq\Delta\cap\vv(P)$ by Lemma \ref{lem2.5}.  It follows that $\vv(Q)\subseteq\vv(P)$ and the indeterminacy set coincides with the zero set of the denominator: $\vv(P)\cap\vv(Q)=\vv(Q)$.
\end{proof}
In Proposition \ref{prop2.4}, the common factor $F$ can be canceled from the fraction $P/Q$ to get $M_1/M_2$, although we consider these expressions to be different as rational functions.  A contrapositive restatement of the Proposition is that a rational function $\not\equiv0$ with no common factors in the numerator and denominator must have an indeterminacy set with dimension $n-2$ or smaller.  This was observed in the examples from Section \ref{ir}, where $n=2$ and the denominators had only isolated zeros.

\begin{defn}\label{def4.2}
  A subset of $\re^{n}$ is a {\underline{semi-algebraic}} set means that it is a finite union of finite intersections of sets of the form $\{F=0\}$ or $\{F>0\}$ for polynomials $F(\vec x)$.  For a subset $\Delta\subseteq\re^{n_1}$, a function $f:\Delta\to\re^{n_2}$ is a {\underline{semi-algebraic}} function means that its graph is a semi-algebraic set in $\re^{n_1+n_2}$.
\end{defn}

The next Proposition states a well-known local description of half-branches of real algebraic plane curves, formulated specifically for application in Theorem \ref{lem2.9}.


\begin{prop}\label{lem2.11}
  Given a semi-algebraic set $X\subseteq\re^2$ with dimension $1$, if $X$ contains $(0,0)$ then there exist $\delta>0$, $\eta>0$, $K\ge0$ so that the intersection of $X$ with the open box $(0,\delta)\times(0,\eta)$ is a disjoint, finite union of graphs of increasing functions, $b_1,\ldots,b_K$ so that for each $k$, ${\displaystyle{\lim_{u\to0^+}b_k(u)=0}}$ and $b_k:(0,\delta)\to(0,\eta)$ is of the form $\tilde b_k(u^{1/e_k})$ where $e_k$ is a positive integer and $\tilde b_k$ is real analytic on $(-\delta^{1/e_k},\delta^{1/e_k})$.  
\end{prop}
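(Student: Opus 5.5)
The plan is to derive the statement from three standard ingredients: semi-algebraic cell decomposition (or Hardt triviality), the curve selection lemma, and the Newton--Puiseux theorem for real analytic or algebraic plane curves, all as developed in \cite{bcr}.

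First, I would use a cylindrical algebraic decomposition of $\re^2$ adapted to $X$, with projection onto the first coordinate $u$. Because $X$ has dimension $1$ and contains $(0,0)$, there is some $\delta>0$ such that, above the interval $(0,\delta)$, the set $X$ is a finite union of graphs of continuous semi-algebraic functions $c_1<\cdots<c_L$. No vertical segments occur above $(0,\delta)$, since one-dimensional cells lying over an open interval are graphs, and two-dimensional cells are excluded by the dimension hypothesis. Each $c_j$ is semi-algebraic, so by the monotonicity theorem, after shrinking $\delta$, it is either constant or strictly monotone on $(0,\delta)$, and $\lim_{u\to0^+}c_j(u)$ exists in $[-\infty,\infty]$. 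I keep only those $c_j$ that are positive on $(0,\delta)$ and tend to $0$; these are the only ones that can meet a box $(0,\delta)\times(0,\eta)$ for small $\eta$. Each such function must be increasing: it is positive near $0^+$ with limit $0$, so it can be neither constant nor decreasing. For the remaining indices, whose limit is nonzero or which are nonpositive, I choose $\eta$ smaller than a positive lower bound for them on a shrunken interval, and shrink $\delta$ again if needed, so that their graphs avoid the box. Because the graphs $c_j$ are strictly ordered, the ones that remain are pairwise disjoint. I relabel them $b_1,\ldots,b_K$, and the case $K=0$ is allowed. I also need $b_k$ to map $(0,\delta)$ into $(0,\eta)$; after shrinking $\delta$ once more this holds, because $b_k$ is increasing with limit $0$.

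The step I expect to be hardest is the Puiseux form $b_k(u)=\tilde b_k(u^{1/e_k})$ with $\tilde b_k$ real analytic on a full symmetric interval. My approach is as follows. The graph of $b_k$ is semi-algebraic and one-dimensional, so it lies in the zero set of a nonzero polynomial $G(u,y)$; one can take its Zariski closure, or a product of the defining polynomials. Factoring out powers of $u$, which cannot vanish identically on the graph since $u>0$ there, I may assume $G(u,y)\not\equiv 0$ as a polynomial in $y$ over $\co\{u\}$. The Newton--Puiseux theorem then says that the roots of $G(u,y)=0$ near $(0,0)$ are given by finitely many convergent series $y=\sum_{m\ge1}a_m u^{m/e}$ with complex coefficients. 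A continuous real root branch on $(0,\delta)$ that tends to $0$ must coincide with one of these series, by continuity of roots and finiteness of branches, after shrinking $\delta$. Reality of $b_k$ for $u>0$ forces every coefficient $a_m$ to be real: if some $a_m$ were non-real, then $b_k(t^{e})$ would be a convergent series in $t$ with a non-real coefficient, which is a contradiction since $b_k(t^e)$ is real for real $t>0$. So I set $e_k=e$ and $\tilde b_k(t)=\sum_{m\ge 1} a_m t^m$. This is a real power series converging on $|t|<\delta^{1/e_k}$ once $\delta$ is chosen below the radius of convergence, and $b_k(u)=\tilde b_k(u^{1/e_k})$ on $(0,\delta)$.

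Finally, I shrink $\delta$ and $\eta$ to common values that work for every $k$. All of the earlier properties survive this shrinking: being graphs, being disjoint, being increasing, the limits, and the analytic representations. That completes the argument.
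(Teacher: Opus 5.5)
Your argument is correct. The paper gives no proof of this proposition in its text: it defers to the Addendum \cite{cp} and calls the result the well-known local description of half-branches of real algebraic plane curves. So there is no in-paper argument to match step by step. That phrasing suggests starting from the half-branches of the Zariski closure of $X$ at the origin and then cutting down by $X$. You instead start from a cylindrical decomposition of $X$ itself. You use the monotonicity theorem to isolate the finitely many increasing branches tending to $0$, choosing $\eta$ first and only then shrinking $\delta$, which is what makes each $b_k$ map into $(0,\eta)$. You invoke Newton--Puiseux only branch by branch, and the identity theorem forces the coefficients to be real. The ingredients are the same, in a different order. Your order has the advantage that being graphs over all of $(0,\delta)$, disjointness, and monotonicity all follow directly from the decomposition and the monotonicity theorem.

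Two small clarifications would tighten the Puiseux step.
\begin{enumerate}
\item Being nonzero as a polynomial in $y$ over $\co\{u\}$ is automatic for $G\neq0$. What dividing out the power of $u$ actually buys, if you use the local Weierstrass form of Newton--Puiseux, is $G(0,y)\not\equiv0$.
\item The identification of $b_k$ with a single root series should be made explicit. Two distinct convergent Puiseux roots differ by $cu^{\alpha}(1+o(1))$ with $c\neq0$. So after shrinking $\delta$, the closed sets $\{u: b_k(u)=\phi_j(u)\}$ are pairwise disjoint and cover the connected interval $(0,\delta)$, and hence one of them is all of it.
\end{enumerate}
Finally, you never use the hypothesis $(0,0)\in X$; the statement holds without it.
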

\begin{proof}
  We refer to the Addendum \cite{cp} for a proof.
\end{proof}

The main steps in the proofs in the next Sections use the circle of ideas generally known as \L ojasiewicz inequalities, which we break up into some specific claims.

\begin{prop}[\L ojasiewicz Inequality]\label{prop6.1}
  Given polynomials $P$ and $Q$ and a point $\vec x_0\in\vv(P)\subseteq\re^n$,
  let $\Bb$ be the following subset of $\re$,
  \begin{equation}\label{eq6.3}
  \Bb=\left\{\beta\ge0:\exists r>0\ \ \exists C>0 : \  |\vec x-\vec x_0|<r\implies|P(\vec x)|^\beta\le C\left|Q(\vec x)\right|\right\}.\nonumber
 \end{equation}
 If $P\not\equiv0$ and there is a radius $\rho>0$ so that $\vv(Q)\cap\overline B_\rho(\vec x_0)\subseteq\vv(P)$, then:
  \begin{itemize}
    \item {\rm{(\L1)}} There exists some integer $N>0$ and some continuous semi-algebraic function $h:\overline B_\rho(\vec x_0)\to\re$ so that $P^N=h\cdot Q$.
    \item {\rm{(\L2)}} $\Bb$ is non-empty.
   \item {\rm{(\L3)}} The greatest lower bound, denoted $\beta_0=\glb\Bb$, satisfies $\beta_0\in\qu$. 
    \item {\rm{(\L4)}} $\beta_0\in\Bb$.
  \end{itemize}
\end{prop}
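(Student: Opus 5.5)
We prove the four claims in order. (\L1) and (\L2) are essentially the classical \L ojasiewicz inequality for continuous semi-algebraic functions on a compact set. (\L3) and (\L4) will follow from a reduction to a one-variable semi-algebraic function, to which Proposition \ref{lem2.11} applies.

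For (\L1), the tool is the standard form of the \L ojasiewicz inequality (\cite{bcr} Corollary 2.6.7). If $f,g$ are continuous semi-algebraic functions on a compact semi-algebraic set $K$ with $\vv(g)\cap K\subseteq\vv(f)\cap K$, then there are an integer $N>0$ and a continuous semi-algebraic $h$ on $K$ with $f^N=hg$. Apply this with $K=\overline B_\rho(\vec x_0)$, $f=P$, $g=Q$; the hypothesis $\vv(Q)\cap\overline B_\rho\subseteq\vv(P)$ is exactly what is needed. For (\L2), $h$ is continuous on a compact set, so $|h|\le C$ there. Then $|P|^N\le C|Q|$ on $B_\rho(\vec x_0)$, so $N\in\Bb$ with $r=\rho$.

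For (\L3), first observe that $\Bb$ is upward closed, at least after shrinking $r$ so that $|P|\le1$ on $B_r$: if $\beta\in\Bb$ and $\beta'>\beta$, then $|P|^{\beta'}\le|P|^\beta\le C|Q|$. So $\Bb$ is an interval $[\beta_0,\infty)$ or $(\beta_0,\infty)$. To compute $\beta_0$, introduce
$$\varphi(t)=\inf\{|Q(\vec x)|:|\vec x-\vec x_0|\le\rho/2,\ |P(\vec x)|=t\}\quad\text{for } t\ge0 \text{ small},$$
with $\varphi(t)=+\infty$ if the set is empty. This is a semi-algebraic function of one variable by Tarski--Seidenberg. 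Moreover $\varphi(t)>0$ for $t>0$: the set $\{|P|=t\}$ meets the compact ball in a compact set disjoint from $\vv(Q)$. Now $\beta\in\Bb$ holds iff $t^\beta\le C\varphi(t)$ for small $t>0$, at least when $r$ is allowed to be $\le\rho/2$; the smaller-ball issue is handled by the same argument with the radius as a further semi-algebraic parameter, or simply by fixing $r$ small from the start.

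If $\varphi=+\infty$ near $0$, then $P\equiv0$ near $\vec x_0$. That is excluded: $P\not\equiv0$, and a polynomial vanishing on an open set is identically zero. Otherwise the graph of $\varphi$ near $(0,0)$ is a one-dimensional semi-algebraic set, and $\varphi(t)\to0$ as $t\to0^+$ since $\vec x_0\in\vv(P)\cap\vv(Q)$. Proposition \ref{lem2.11} then gives $\varphi(t)=\tilde b(t^{1/e})$ with $\tilde b$ real analytic and not identically zero, so $\varphi(t)=a t^{m/e}(1+o(1))$ with $a>0$ and $m$ a positive integer. A short case check on the portions of the graph gives this; if several branches appear, $\varphi$ is eventually one of them by the finiteness of semi-algebraic sign changes. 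It follows that $t^\beta\le C\varphi(t)$ for small $t$ iff $\beta\ge m/e$. Hence $\beta_0=m/e\in\qu$, which is (\L3), and $\beta_0\in\Bb$ with $C$ slightly larger than $1/a$, which is (\L4).

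The main obstacle is making this reduction to $\varphi$ rigorous. One must check that $\varphi$ is semi-algebraic, that its graph near the origin lies in the box $(0,\delta)\times(0,\eta)$ so Proposition \ref{lem2.11} applies, and that the infimum over a fixed closed ball correctly captures membership in $\Bb$ for every radius $r$. If the box formulation is awkward, the fallback is to apply the monotonicity theorem for semi-algebraic functions (\cite{bcr}) together with the Puiseux expansion directly.
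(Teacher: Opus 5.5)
You handle (\L1) and (\L2) the same way the paper does. For (\L3) and (\L4) the paper gives no proof of its own: it cites the literature and calls (\L4) the deepest part. Your one-variable reduction is essentially the argument of Case 1 of Theorem~\ref{lem2.9}, namely the minimum function of Lemma~\ref{lem2.10} combined with Proposition~\ref{lem2.11}. It fails at the point the paper warns about there: $\varphi$ depends on the fixed ball and does not localize at $\vec x_0$.

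Concretely, your equivalence ``$\beta\in\Bb$ iff $t^\beta\le C\varphi(t)$ for small $t$'' holds only from right to left. Since $\varphi$ is an infimum over all of $\overline B_{\rho/2}(\vec x_0)$, it is governed by the worst behavior at any point of $\vv(Q)\cap\overline B_{\rho/2}(\vec x_0)$, boundary included. By contrast, $\beta\in\Bb$ only controls $|Q|$ on some possibly much smaller ball. For example, take $n=1$, $P=x(x-1)$, $Q=x(x-1)^3$, $\vec x_0=0$, $\rho=2$. Near $x=1$ one has $|P|\approx|x-1|$ and $|Q|\approx|x-1|^3$, so $\varphi(t)\asymp t^3$, while $\beta_0=1$. (The case $Q(\vec x_0)\ne0$, which you silently exclude, is trivial with $\beta_0=0$.)

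What your argument does prove is a ball version. Let $\varphi_r$ be the infimum over $\overline B_r(\vec x_0)$ and $\beta_r$ its exponent. Then $\beta_r$ is rational, the inequality with exponent $\beta_r$ holds on $\overline B_r$, $\beta_r$ is nondecreasing in $r$, and $\beta_0=\inf_{r>0}\beta_r$. To get (\L3) and (\L4) you still need $\beta_r$ to be constant for all sufficiently small $r$, since an infimum of rationals need be neither rational nor attained. Neither of your proposed fixes supplies this:
\begin{itemize}
\item ``Fixing $r$ small from the start'' is circular. A radius with $\beta_r=\beta_0$ exists exactly when $\beta_0\in\Bb$, which is (\L4) itself.
\item Treating the radius as a parameter gives a semi-algebraic function $\Phi(r,t)$ of two variables. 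Proposition~\ref{lem2.11} is a one-variable statement and says nothing about the leading Puiseux exponent of $t\mapsto\Phi(r,t)$ being independent of $r$. That uniformity is a genuinely new ingredient, for instance a preparation theorem with parameters, or the arc and curve-selection arguments in the works the paper cites.
\end{itemize}

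Your argument is complete when $\vec x_0$ is an isolated point of $\vv(Q)$ and $\rho$ is shrunk so that $\vv(Q)\cap\overline B_\rho(\vec x_0)=\{\vec x_0\}$. Then $|Q|$ is bounded below on $\overline B_{\rho/2}\setminus B_r$, so $\beta\in\Bb$ does give the inequality on the whole ball. This is the same localization the paper performs in Case 1, there via $|\vec x|^L\le c|Q|$. When $\vv(Q)$ is positive-dimensional at $\vec x_0$, every ball contains other zeros of $Q$, and your proof does not establish (\L3) or (\L4).
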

\begin{proof}[Sketch]
  It is straightforward to check that if $\beta_1\in\Bb$ and $\beta_2>\beta_1$, then $\beta_2\in\Bb$.  If $Q(\vec x_0)=0$ then $0\notin\Bb$.

  Statement (\L1) is a special case of Theorem 2.6.6 of \cite{bcr} (it holds more generally for continuous, semi-algebraic functions).  Statement (\L2) follows from (\L1), where $|h|$ has some maximum value on the closed ball with $r=\rho$ by continuity, so $N\in\Bb$.  The set $\Bb$ does not depend on $N$ from (\L 1) or the value of $\rho$.  However, $\Bb$ and its greatest lower bound $\beta_0\ge0$ depend on the point $\vec x_0$.  

  For statement (\L3), we refer to \cite{br}.  For the purposes of this article, we do not need to know the exact value of $\beta_0$, but for recent work on the problem of estimating $\beta_0$ we refer to \cite{bmn} and \cite{ouss}.

  Statement (\L4) seems to be the deepest part of the Proposition, and does not immediately follow from (\L1).  We refer to \cite{s}, where the claim is established for real analytic functions using geometric properties of subanalytic sets proved by \cite{p}.
\end{proof}
\begin{prop}[\L ojasiewicz Gradient Inequality]\label{prop6.2}
  Given a polynomial $P$ and a point $\vec x_0\in\vv(P)\subseteq\re^n$, if $P\not\equiv0$ and $\vec\nabla P(\vec x_0)=\vec0$ then there is a radius $\rho>0$ so that $\vv(|\vec\nabla P|^2)\cap\overline B_\rho(\vec x_0)\subseteq\vv(P)$, and:
  \begin{itemize}
    \item {\rm{(\L5)}} This set ${\mathbf\Theta}\subseteq\re$ is non-empty:
    \begin{equation}\label{eq6.4}
  {\mathbf\Theta}=\left\{\theta>0:\exists r>0\ \ \exists C>0 : \  |\vec x-\vec x_0|<r\implies|P(\vec x)|^\theta\le C\left|\vec\nabla P(\vec x)\right|\right\}.\nonumber
 \end{equation}
   \item {\rm{(\L6)}} $\theta_0=\glb{\mathbf\Theta}$ satisfies $\theta_0\in\qu\cap\left[\frac12,1\right)$. 
    \item {\rm{(\L7)}} $\theta_0\in{\mathbf\Theta}$.
  \end{itemize}
\end{prop}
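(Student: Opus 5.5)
This is the \L ojasiewicz gradient inequality, which the paper sketches from the literature in the same way as Proposition \ref{prop6.1}. I would first establish the preliminary containment, then reduce (\L5) and (\L7) to Proposition \ref{prop6.1} with $Q=|\vec\nabla P|^2$, and finally prove the interval bounds in (\L6).

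\emph{Containment.} The set of critical values of the polynomial $P$ is finite (semi-algebraic Sard theorem, \cite{bcr}). Hence there is $\rho>0$ such that the only critical value of $P$ on $\overline B_\rho(\vec x_0)$ is $P(\vec x_0)=0$. To see this, note that near $\vec x_0$ the critical set is a semi-algebraic set on which $P$ is locally constant along each connected semi-algebraic arc, by the curve selection lemma. So every $\vec x$ in $\overline B_\rho$ with $\vec\nabla P(\vec x)=\vec 0$ has $P(\vec x)=0$, which is the inclusion $\vv(|\vec\nabla P|^2)\cap\overline B_\rho\subseteq\vv(P)$.

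\emph{Reduction of (\L5) and (\L7).} Apply Proposition \ref{prop6.1} with $Q=|\vec\nabla P|^2$. The inequality $|P|^{\theta}\le C|\vec\nabla P|$ is equivalent to $|P|^{2\theta}\le C^2|\vec\nabla P|^2$, so $\mathbf\Theta=\{\beta/2:\beta\in\Bb\}\setminus\{0\}$. Since $0\notin\Bb$, (\L2) gives $\mathbf\Theta\neq\emptyset$, which is (\L5). Likewise (\L3) gives $\theta_0=\beta_0/2\in\qu$, and (\L4) gives $\theta_0\in\mathbf\Theta$, which is (\L7).

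\emph{Bounds in (\L6).} For $\theta_0\ge\frac12$: $P$ is a polynomial with $P(\vec x_0)=0$ and $\vec\nabla P(\vec x_0)=\vec0$, so Taylor's theorem gives $|\vec\nabla P(\vec x)|\le C'|\vec x-\vec x_0|$. Take a unit direction $v$ along which $P$ does not vanish identically, with order of vanishing $m\ge2$ there. Along $\vec x_0+tv$ we have $|P|\approx c|t|^m$, while $|\vec\nabla P|$ behaves like $|t|^{k}$ with $k\ge m-1$, since its directional component is $\sim|t|^{m-1}$ and the full gradient has order at least $1$. If $\theta<\frac12$, this would require $|t|^{m\theta}\lesssim|t|^{m-1}$ and $|t|^{m\theta}\lesssim|t|^{k}$. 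A cleaner route is to use the curve selection lemma on the set where $|\vec\nabla P|$ is as small as possible relative to $|P|$, giving an analytic arc $\gamma(t)$ with $P(\gamma(t))\sim t^a$ and $|\vec\nabla P(\gamma(t))|\sim t^b$. In any case, the standard argument for $\theta_0\ge\frac12$ is that $|\vec\nabla P|^2\lesssim |P|$ near a critical zero, by Taylor expansion of the nonnegative function $|P|$ along gradient steps: for $\vec x$ near $\vec x_0$ and $h=\epsilon\vec\nabla P(\vec x)$, $0\le |P(\vec x-h\cdot\mathrm{sgn}P)|\le|P(\vec x)|-\epsilon|\vec\nabla P|^2+C\epsilon^2|\vec\nabla P|^2$. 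This yields $|\vec\nabla P|^2\le C''|P|$. Combining it with $|P|^\theta\le C|\vec\nabla P|$ gives $|P|^{\theta}\le C'''|P|^{1/2}$ near $\vec x_0$, forcing $\theta\ge\frac12$ because $P$ takes nonzero values arbitrarily close to $\vec x_0$.

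\emph{Main obstacle: $\theta_0<1$.} We need some $\theta<1$ in $\mathbf\Theta$. I would argue by contradiction using the curve selection lemma. If $\theta_0\ge1$, then for every $\theta<1$ there are points with $|\vec\nabla P|<|P|^{\theta}$, and a semi-algebraic arc $\gamma:[0,\epsilon)\to\re^n$ with $\gamma(0)=\vec x_0$ can be selected along which $|\vec\nabla P(\gamma)|\ll |P(\gamma)|$ asymptotically. Parametrize by Puiseux series in $t$, as in Proposition \ref{lem2.11}: $P(\gamma(t))\sim ct^{a}$ with $a>0$ and $|\gamma'(t)|\sim t^{s-1}$ with $s\ge1$. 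The chain rule gives $|act^{a-1}|\lesssim|\vec\nabla P(\gamma)|\,|\gamma'|\lesssim|\vec\nabla P(\gamma)|\,t^{s-1}$, hence $|\vec\nabla P(\gamma)|\gtrsim t^{a-s}$. Using $|\gamma(t)-\vec x_0|\sim t^{s}$ and reparametrizing by arc length shows $|\vec\nabla P(\gamma)|\gtrsim |P(\gamma)|^{1-1/a}$, with the exponent $1-1/a<1$ uniform over finitely many arcs via the finiteness of the exponents (\L3). Making this uniformity rigorous is the delicate step, and I would cite \cite{s} or \cite{p} for it, as the paper does for (\L4).
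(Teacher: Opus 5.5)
Your containment argument (finitely many critical values of $P$, plus continuity) is correct. So is your derivation of (\L5), (\L7) and $\theta_0\in\qu$ from Proposition \ref{prop6.1} with $Q=|\vec\nabla P|^2$, via $\mathbf\Theta=\frac12\Bb$ and $0\notin\Bb$. The paper cites \cite{bm} for the containment and (\L5), and gets (\L7) from Proposition \ref{prop6.1} exactly as you do. For the bounds in (\L6) the paper gives no argument at all and only cites \cite{dk}, \cite{f}, \cite{ouss1}.

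Where you try to prove those bounds yourself, there is a genuine error. The inequality $|\vec\nabla P|^2\le C''|P|$ that you use for $\theta_0\ge\frac12$ is false for sign-changing $P$. For $P=xy$ (Example \ref{ex4.4}) one has $|\vec\nabla P|^2=x^2+y^2$, while $P=0$ on the axes. The descent step only bounds the signed value $\mathrm{sgn}(P(\vec x))\,P(\vec x-\epsilon\,\mathrm{sgn}(P(\vec x))\vec\nabla P(\vec x))$ from above. That value can be negative once the step crosses $\vv(P)$, and the lower bound $0$ is available only when $P$ has one sign near $\vec x_0$.

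Your first, abandoned idea does work if the direction is chosen correctly. Let $m\ge2$ be the order of $P$ at $\vec x_0$, i.e.\ the degree of the lowest nonzero homogeneous part $P_m$ of its Taylor expansion there, and take $v$ with $P_m(v)\ne0$. Then $|P(\vec x_0+tv)|\ge c|t|^m$ and $|\vec\nabla P(\vec x)|\le C'|\vec x-\vec x_0|^{m-1}$. So $\theta\in\mathbf\Theta$ forces $m\theta\ge m-1$, i.e.\ $\theta\ge1-\frac1m\ge\frac12$. For an arbitrary direction, the directional derivative only bounds $|\vec\nabla P|$ from below, so your claim $k\ge m-1$ fails. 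For example, $P=xy+x^3$ along the $x$-axis has $|P|=|t|^3$ but $|\vec\nabla P|\sim|t|$.

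For $\theta_0<1$, the arcs you extract from $\{|\vec\nabla P|<|P|^\theta\}$ depend on $\theta$, so nothing controls the exponent $1-1/a$. Also, (\L3) only says the exponent is rational; it does not supply finitely many arcs. As written, this step is unproved.

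You do not need uniformity; one arc suffices, in the spirit of Lemma \ref{lem2.10} and Case 1 of Theorem \ref{lem2.9}. Let $\psi(\epsilon)=\min\{|\vec\nabla P(\vec x)|:\vec x\in\overline B_r(\vec x_0),\ |P(\vec x)|=\epsilon\}$ with $r\le\rho$. It is semi-algebraic, positive for small $\epsilon>0$, and tends to $0$, so $\psi(\epsilon)\sim c\,\epsilon^{\alpha}$ with $c,\alpha>0$. Since $|\vec\nabla P|\ge\psi(|P|)$, after shrinking the ball $\alpha\in\mathbf\Theta$, hence $\theta_0\le\alpha$. A semi-algebraic curve of minimizers $\vec x(\epsilon)$ is bounded, so its Puiseux expansion gives $|\vec x\,'(\epsilon)|\lesssim\epsilon^{1/e-1}$ for some integer $e\ge1$. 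Differentiating $|P(\vec x(\epsilon))|=\epsilon$ then gives $1\le\psi(\epsilon)\,|\vec x\,'(\epsilon)|\lesssim\epsilon^{\alpha+1/e-1}$. Hence $\alpha\le1-\frac1e<1$.
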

\begin{proof}[Sketch]
  For the containment $\vv(|\vec\nabla P|^2)\cap\overline B_\rho(\vec x_0)\subseteq\vv(P)$ and statement (\L5) we refer to \cite{bm}, and then (\L7) follows from Proposition \ref{prop6.1} applied to the polynomial $\left|\vec\nabla P(\vec x)\right|^2$.  For estimates on $\theta_0$ as in (\L6), we refer to \cite{dk}, \cite{f}, and \cite{ouss1}.
\end{proof}
Recall that for a polynomial $P:\re^n\to\re$, the set of critical points ($\vec x_0$ such that $\vec\nabla P(\vec x_0)=\vec0$) is, for generic $P$, a finite set of isolated points in $\re^n$.   Proposition \ref{prop6.2} holds even for exceptional polynomials where the critical points are not isolated.

\section{Another property of rational functions}

The next Lemma uses (\L1) from Proposition \ref{prop6.1}, and considers an increase in the power of the denominator, unlike Lemma \ref{lem3.4} where the power decreased.  Lemma \ref{lem3.3} has a stronger hypothesis than Lemma \ref{lem3.4}; the rational function is not just locally bounded but approaches $0$ at points in its indeterminacy set. 


\begin{lem}\label{lem3.3}
  For an open ball $B_r(\vec x_0)=B_r$ and a rational function $P(\vec x)/Q(\vec x)$, if the function $s:B_r\to\re$ defined by
  $$s(\vec x)=\left\{\begin{array}{ll}{\displaystyle{\frac{P(\vec x)}{Q(\vec x)}}} & \mbox{\rm{for} $\vec x\in B_r\setminus\vv(Q)$}\\ & \\0& \mbox{\rm{for} $\vec x\in B_r\cap\vv(Q)$} \end{array}\right.$$ is continuous on $B_r$,
%
  then there exists a rational number $\varphi>0$ so that
    $$s_\varphi(\vec x)=\left\{\begin{array}{ll}{\displaystyle{\frac{|P(\vec x)|}{|Q(\vec x)|^{1+\varphi}}}} & \mbox{\rm{for} $\vec x\in B_r\setminus\vv(Q)$}\\ & \\0& \mbox{\rm{for} $\vec x\in B_r\cap\vv(Q)$} \end{array}\right.$$ is continuous on $B_r$.
%
\end{lem}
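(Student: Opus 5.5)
The plan is to apply the \L ojasiewicz division property (\L1) to the continuous semi-algebraic function $s$ itself, rather than to the polynomial $P$. We then convert the resulting divisibility into a H\"older-type bound $|s|\le C|Q|^{1/N}$, which leaves room for a positive power $\varphi$ in the denominator.

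First, I observe that $s$ is a continuous semi-algebraic function on $B_r$. Its graph is
$$\{(\vec x,t):\vec x\in B_r,\ Q(\vec x)\ne0,\ tQ(\vec x)=P(\vec x)\}\cup\{(\vec x,0):\vec x\in B_r,\ Q(\vec x)=0\},$$
which is a finite union of finite intersections of sets of the form $\{F=0\}$ or $\{F>0\}$, as in Definition \ref{def4.2}. Since $s$ vanishes on $B_r\cap\vv(Q)$ by definition, we have the containment $\vv(Q)\cap B_r\subseteq\vv(s)$.

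Next, I invoke the general form of (\L1), Theorem 2.6.6 of \cite{bcr}, which the Sketch of Proposition \ref{prop6.1} notes holds for continuous semi-algebraic functions. The open ball $B_r$ is a locally closed semi-algebraic set, and $\vv(Q)\cap B_r\subseteq\vv(s)$. So there should exist an integer $N>0$ and a continuous semi-algebraic $h:B_r\to\re$ with $s^N=h\cdot Q$ on $B_r$. I then take $\varphi=1/(2N)$, which is rational and positive. For $\vec x\in B_r\setminus\vv(Q)$ this gives
$$s_\varphi(\vec x)=\frac{|s(\vec x)|}{|Q(\vec x)|^{\varphi}}=\frac{|h(\vec x)|^{1/N}|Q(\vec x)|^{1/N}}{|Q(\vec x)|^{1/(2N)}}=|h(\vec x)|^{1/N}|Q(\vec x)|^{1/(2N)},$$
and the right-hand side is $0$ on $\vv(Q)$, consistent with the definition of $s_\varphi$. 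Hence $s_\varphi=|h|^{1/N}|Q|^{1/(2N)}$ holds on all of $B_r$, and it is continuous as a product of continuous functions. Continuity off $\vv(Q)$ is in any case clear by elementary calculus.

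The main obstacle is obtaining one exponent $N$, and hence one $\varphi$, valid on the whole open ball rather than only near each point. If (\L1) were only available on compact balls, as stated in Proposition \ref{prop6.1}, the exponent could a priori depend on the point. Moreover, $s$ need not extend continuously to $\partial B_r$, so compactness of $\overline B_r$ cannot be used directly. I would therefore rely on the version of \cite{bcr} Theorem 2.6.6 for locally closed semi-algebraic domains, where only local boundedness of $h$ is needed, not a global bound. As a fallback, I would compose with a semi-algebraic homeomorphism $B_r\to\re^n$, such as $\vec x\mapsto(\vec x-\vec x_0)/\sqrt{r^2-|\vec x-\vec x_0|^2}$, to reduce to a closed domain.
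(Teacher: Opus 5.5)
Your proposal is correct and is essentially the paper's proof. Both show that the graph of $s$ is semi-algebraic, then apply Theorem 2.6.6 of \cite{bcr} directly on the open ball $B_r$ to $s$ and $Q$, obtaining $s^N=hQ$ with a single exponent, and use that decay to make room for a positive $\varphi$. The only difference is in the final algebra: you write $s_\varphi=|h|^{1/N}|Q|^{1/N-\varphi}$ with $\varphi=1/(2N)$, whereas the paper multiplies by $|P|^{1/2}$ and takes roots to get $s_\varphi=|h^2P|^{1/(2\nu+1)}$ with $\varphi=1/(2\nu+1)$, which additionally needs $B_r\cap\vv(Q)\subseteq\vv(P)$ from Lemma~\ref{lem2.5}, a step your version avoids.
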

\begin{proof}
    $|P(\vec x)|/|Q(\vec x)|^{1+\varphi}$ is continuous at points outside $\vv(Q)$ for any $\varphi$, so we only need to find $\varphi$ so that $|P|/|Q|^{1+\varphi}$ approaches $0$ for points in $B_r\cap\vv(Q)$.  The continuous function $s$ is locally bounded near any point of $B_r$, so by Lemma \ref{lem2.5} applied to a neighborhood of any point in $B_r\cap\vv(Q)$ where $s$ is bounded, $B_r\cap\vv(Q)\subseteq B_r\cap\vv(P)$.
  
    The graph of $s(\vec x)$, $$G=\{(\vec x,s(\vec x)):\vec x\in B_r\}\subseteq B_r\times\re\subseteq\re^{n+1}$$ is contained in the real algebraic set $$\vv(x_{n+1}Q-P)=\{(x_1,\ldots,x_n,x_{n+1}):x_{n+1}Q(\vec x)-P(\vec x)=0\}\subseteq\re^{n+1}.$$ At each point $\vec x_1\in B_r$ where $Q(\vec x_1)=0$, we also have $P(\vec x_1)=0$, and $x_{n+1}Q(\vec x_1)-P(\vec x_1)=0$ for every $x_{n+1}\in\re$, so $\vv(x_{n+1}Q-P)$ contains a vertical line $\{(\vec x_1,x_{n+1}):x_{n+1}\in\re)\}$.  To get the graph of $s$,  remove the positive and negative parts of each such line, but keep the point $(\vec x_1,0)$, resulting in: $$G=((\vv(x_{n+1}Q-P)\setminus(\vv(Q)\times\re))\cup(\vv(Q)\times\{0\}))\cap(B_r\times\re).$$  This description shows that $G$ is a semi-algebraic set in $\re^{n+1}$ (after some elementary set operations, $G$ is a finite union of finite intersections as in Definition \ref{def4.2}).
    
    So, by construction, the function $s$ with graph $G$ is continuous on $B_r$ and semi-algebraic, and has a zero set that contains $B_r\cap\vv(Q)$.  These are exactly the hypotheses needed for   Theorem 2.6.6 of \cite{bcr} to apply to $s$ and $Q$, concluding that there exist some integer $\nu$ and some continuous, semi-algebraic function $h:B_r\to\re$ so that $(s(\vec x))^\nu=h(\vec x)\cdot Q(\vec x)$ for $\vec x\in B_r$.  (This is statement (\L1) from Proposition \ref{prop6.1}, in the continuous semi-algebraic case.)  For $\vec x\in B_r\setminus\vv(Q)$,
  \begin{eqnarray*}
      \left(\frac{P(\vec x)}{Q(\vec x)}\right)^\nu&=&h(\vec x)Q(\vec x)\\
      \implies\frac{(P(\vec x))^\nu|P(\vec x)|^{1/2}}{(Q(\vec x))^{\nu+1}}&=&h(\vec x)|P(\vec x)|^{1/2}\\
      \implies\frac{|P(\vec x)|^{2\nu+1}}{|Q(\vec x)|^{2\nu+2}}&=&h(\vec x)^2|P(\vec x)|\\
      \implies\frac{|P(\vec x)|}{|Q(\vec x)|^{(2\nu+2)/(2\nu+1)}}&=&|h(\vec x)^2P(\vec x)|^{1/(2\nu+1)}.
  \end{eqnarray*}
  This implies the claimed continuity, with $\varphi=\frac1{(2\nu+1)}$.  We remark that if $h$ is bounded by $c$ on some subset of $B_r$, then $|s|^\nu\le c|Q|$, as in (\L2) from Proposition \ref{prop6.1}.
\end{proof}
Lemma \ref{lem3.3} will be a key step in both Theorem \ref{lem2.9} and Theorem \ref{thm3.3}; its general idea is that the continuity of the extension $s$ is an ``open'' property in the sense that it is stable under small changes in the exponent of $|Q|$.  Example \ref{ex1.3} satisfies the assumptions of Lemma \ref{lem3.3} and shows that $\varphi$ may need to be small: $P/Q^{1.04}$ has a continuous extension on the disk $B_1(\vec0)$, but $P/Q^{1.05}$ does not.  
\begin{example}\label{ex2.8}
  The rational function $P/Q:\re^2\to\re$, for $P=xy$ and $Q=x$, extends to the polynomial function $y$.  Because $P/Q$ is bounded on any disk $B_r(\vec0)$, Lemma \ref{lem3.4} applies: $\frac{|P|}{|Q|^{1-\tau}}$ extends to the continuous function $|y||x|^\tau$ for any $\tau>0$.  Proposition \ref{lem2.7} also applies: $\frac{|P|}{|Q|^{\xi_0}}=|y|/|x|^{\xi_0-1}$ is unbounded for any $\xi_0>1$.  However, the extension $s(x,y)$ from Lemma \ref{lem3.3} is not the same as the extension $y$, and $P$ and $Q$ do not satisfy the assumption or the conclusion of Lemma \ref{lem3.3}.  This example also has the properties (1), (2), and (3) from Proposition \ref{prop2.4}, where the common factor $F=x$ has a non-singular point in its zero set and the indeterminacy locus has codimension $1$.
\end{example}
\begin{example}\label{ex2.9}
   Another trivial example, $\displaystyle{\frac PQ=\frac{x^2+y^2}{x^2+y^2}}$, shows that it is not enough in Lemma \ref{lem3.3} to require that $P/Q$ extend at one point to some continuous (or even constant) function; the extension $s$ from the Lemma must be both continuous and have value $0$ on the indeterminacy set.  Proposition \ref{prop2.4} also does not apply; the common factor $F=x^2+y^2$ has no nonsingular points and the codimension of the indeterminacy set is $2$.
\end{example}
\begin{example}\label{ex2.12}
  The rational function $P/Q:\re^3\to\re$, for $P=zx^2$ and $Q=x^2+y^2$, is bounded on any ball $B_r(\vec0)$, so Lemma \ref{lem3.4} and Proposition \ref{lem2.7} both apply. 
 The indeterminacy set is the $z$-axis, and $P$ and $Q$ have no non-constant common factor.  The extension $s(x,y,z)$ from Lemma \ref{lem3.3} satisfies ${\displaystyle{\lim_{(x,y,z)\to\vec0}s(x,y,z)=0}}$ but unlike Example \ref{ex2.8}, $P/Q$ does not have any continuous extension on any ball $B_r(\vec0)$: if $0<z_0<r$ then, $${\displaystyle{\lim_{(x,0,z)\to(0,0,z_0)}\frac{zx^2}{x^2+y^2}=z_0\ne\lim_{(0,y,z)\to(0,0,z_0)}\frac{zx^2}{x^2+y^2}=0}}.$$  This $P$ and $Q$ do not satisfy the assumption or the conclusion of Lemma \ref{lem3.3}.
\end{example}


\section{Existence of discontinuous, locally bounded rational functions}\label{sec2}

The proof of our main result about rational functions uses the Lemmas of the previous Sections, and the following Lemma, about two continuous functions and the minimum value of one when restricted to the level set of the other.

\begin{lem}\label{lem2.10}
  For a closed ball ${\overline B}_r(\vec x_0)\subseteq\re^n$ and continuous functions $\Phi:{\overline B}_r(\vec x_0)\to\re$, $\Upsilon:{\overline B}_r(\vec x_0)\to\re$, if there is a point $\vec x_1\in B_r(\vec x_0)$ so that $\Phi(\vec x_1)=\Upsilon(\vec x_1)=0$ is the minimum value for both $\Phi$ and $\Upsilon$, and there is no neighborhood of $\vec x_1$ on which $\Phi$ is constant, then  there is some $u_1>0$ so that the following function:
   $$\gamma(u)=\min\{\Upsilon(\vec x):\vec x\in {\overline B}_r(\vec x_0)\mbox{\ and\ }\Phi(\vec x)=u\}$$
   is well-defined and lower semicontinuous on the interval $[0,u_1]$, and satisfies $$\displaystyle{\lim_{u\to0^+}\gamma(u)=\gamma(0)=0}.$$
\end{lem}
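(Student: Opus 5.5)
The proof has three parts: well-definedness of $\gamma$ on some interval $[0,u_1]$, lower semicontinuity by a compactness argument, and the limit at $0$ by a connectedness argument. The connectedness step is the real content.

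\emph{Well-definedness.} For $u\ge0$ let $L_u=\{\vec x\in\overline B_r(\vec x_0):\Phi(\vec x)=u\}$. This set is closed in a compact ball, hence compact, and $\Upsilon$ is continuous. So the minimum exists as soon as $L_u\neq\emptyset$. We need nonemptiness for all small $u>0$.

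Since $\Phi$ is not constant on any neighborhood of $\vec x_1$ and $\Phi\ge\Phi(\vec x_1)=0$, every ball $B_\epsilon(\vec x_1)$ contains a point where $\Phi>0$. Fix $\epsilon_0$ with $\overline B_{\epsilon_0}(\vec x_1)\subseteq B_r(\vec x_0)$ and pick $\vec y$ there with $u_1:=\Phi(\vec y)>0$. The segment from $\vec x_1$ to $\vec y$ is connected, so by the intermediate value theorem $\Phi$ takes every value $u\in[0,u_1]$ on it. Hence $L_u\neq\emptyset$ and $\gamma(u)$ is defined. Clearly $\gamma(0)=0$, because $\vec x_1\in L_0$, $\Upsilon(\vec x_1)=0$, and $\Upsilon\ge0$.

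\emph{Lower semicontinuity.} Take $u_k\to u$ in $[0,u_1]$ and points $\vec z_k\in L_{u_k}$ with $\Upsilon(\vec z_k)=\gamma(u_k)$. Pass to a subsequence realizing $\liminf\gamma(u_k)$, then to a further subsequence with $\vec z_k\to\vec z\in\overline B_r(\vec x_0)$ by compactness. Continuity of $\Phi$ gives $\Phi(\vec z)=u$, so $\vec z\in L_u$. Continuity of $\Upsilon$ then gives $\gamma(u)\le\Upsilon(\vec z)=\liminf\gamma(u_k)$.

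\emph{The limit at $0$.} Since $\gamma\ge0$, it suffices to show $\limsup_{u\to0^+}\gamma(u)=0$. Given $\epsilon\in(0,\epsilon_0]$, continuity of $\Upsilon$ at $\vec x_1$ gives $\delta\le\epsilon$ with $\Upsilon<\epsilon$ on $B_\delta(\vec x_1)$. As before, choose $\vec y_\delta\in B_\delta(\vec x_1)$ with $\Phi(\vec y_\delta)=:u_\delta>0$. The segment $[\vec x_1,\vec y_\delta]$ lies in $B_\delta(\vec x_1)$ and $\Phi$ attains every $u\in(0,u_\delta]$ on it. So for each such $u$ there is $\vec x\in L_u$ with $\Upsilon(\vec x)<\epsilon$, and therefore $\gamma(u)<\epsilon$ for all $0<u\le u_\delta$, which is the claimed limit.

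Here the connectedness of small balls around $\vec x_1$ is exactly what turns pointwise closeness into a statement for \emph{every} small $u$. The one hypothesis that must be used carefully is non-constancy of $\Phi$ near $\vec x_1$: it supplies, in every neighborhood, a point with $\Phi>0$, which starts the intermediate value argument.
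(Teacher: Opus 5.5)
Your proof is correct and follows the paper's argument: the same intermediate value step on a segment from $\vec x_1$ to a nearby point where $\Phi>0$ gives both well-definedness of $\gamma$ and $\gamma(u)<\epsilon$ for all small $u>0$. The only variation is in the lower semicontinuity step. You extract a convergent subsequence of minimizers, whereas the paper uses compactness of $\Phi(\overline B_r(\vec x_0)\setminus V)$ for the open set $V=\{\Upsilon>\gamma(u_0)-\epsilon_2\}$; these are two forms of the same compactness argument.
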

\begin{proof}
  On the level set $\{\Phi=0\}$, $\Upsilon$ attains its minimum value $\gamma(0)=\Upsilon(x_0)=0$.
  For any $\epsilon_1>0$ there is some open ball $B_\delta(\vec x_1)$ so that if $\vec x\in B_\delta(\vec x_1)$ then $\Upsilon(\vec x)<\epsilon_1$.  By hypothesis, there is some point $\vec x_2\in B_\delta(\vec x_1)$ with $0<\Phi(\vec x_2)$.  By the Intermediate Value Theorem (applied to $\Phi$ on the segment from $\vec x_1$ to $\vec x_2$), for every $u\in[0,\Phi(\vec x_2)]$, the level set $\{\vec x\in {\overline B}_r(\vec x_0):\Phi(\vec x)=u\}$ meets the set $B_\delta(\vec x_1)$ in at least one point $\vec x_3$.  This level set is non-empty and compact, so the minimum value of $\Upsilon$ on the set, $\gamma(u)=\Upsilon(\vec x_4)$, exists (showing $\gamma$ is well-defined on $[0,u_1]$ with $u_1=\Phi(\vec x_2)$) and satisfies $0\le\gamma(u)\le\Upsilon(\vec x_3)<\epsilon_1$ (showing $\displaystyle{\lim_{u\to0^+}\gamma(u)=0}$).  The lower semicontinuity of $\gamma$ at a point $u_0$ in $[0,u_1]$ refers to the property that for any $\epsilon_2>0$ there is some neighborhood $W$ of $u_0$ in $[0,u_1]$ so that if $u\in W$ then $\gamma(u)>\gamma(u_0)-\epsilon_2$.  Let $V$ be the relatively open set $\{\vec x\in{\overline B}_r(\vec x_0):\Upsilon(\vec x)>\gamma(u_0)-\epsilon_2\}$. 
  The complement ${\overline B}_r(\vec x_0)\setminus V$ is compact, and its image $\Phi({\overline B}_r(\vec x_0)\setminus V)$ is compact in $\re$.  The complement $[0,u_1]\setminus(\Phi({\overline B}_r(\vec x_0)\setminus V))$ is the required neighborhood $W$ of $u_0$.  A point $u\in[0,u_1]$ is in $W$ if and only if there is no $\vec x\in {\overline B}_r(\vec x_0)\setminus V$ with $\Phi(\vec x)=u$; equivalently $V$ contains the level set $\{\vec x\in {\overline B}_r(\vec x_0):\Phi(\vec x)=u\}$, which is non-empty by the above Intermediate Value Theorem step.   The point $u_0\in W$ because every $\vec x$ in the level set $\{\Phi=u_0\}$ satisfies $\Upsilon(\vec x)\ge\gamma(u_0)$, so $\vec x\in V$.  For every other $u\in W$, the minimum value of $\Upsilon$ on the level set $\{\Phi=u\}\subseteq V$ satisfies $\gamma(u)>\gamma(u_0)-\epsilon_2$.
\end{proof}

The following Theorem on rational functions $P/Q$ is our main result.  We give two proofs --- the first proof (Case 1.) is similar to the argument in the Proof of Theorem 6.4 in \cite{bm}; it uses just the (\L1) version of the \L ojasiewicz inequality but we only prove the claim for $Q$ with an isolated zero.  The second proof (Case 2.) is shorter and works for any $Q$, but uses the stronger  statements (\L3) and (\L4).  The uniqueness part of the proof shows that the existence of exponents so that $P^{2p}/Q^{2e}$ is bounded and discontinuous implies (and so is equivalent to) the properties (\L3) and (\L4) of the polynomials $P$, $Q$.

\begin{thm}\label{lem2.9}
  For a rational function $P/Q$, suppose $P\not\equiv0$.   If there is some open set $\Delta$ with $\vec0\in\Delta\cap\vv(Q)\subseteq\Delta\cap\vv(P)$, then there exists some $r_1>0$ and some positive integers $p$, $e$ so that for any open ball $B_r(\vec0)$ with radius $0<r\le r_1$, the rational function $P^{2p}/Q^{2e}$ is bounded on $B_r$, and the function
    \begin{equation}\label{eq5.1}
    s(\vec x)=\left\{\begin{array}{ll}{\displaystyle{\frac{(P(\vec x))^{2p}}{(Q(\vec x))^{2e}}}} & \mbox{\rm{for} $\vec x\in B_r\setminus\vv(Q)$}\\ & \\0& \mbox{\rm{for} $\vec x\in B_r\cap\vv(Q)$} \end{array}\right.
    \end{equation} is not continuous on $B_r$.  
    The ratio $p/e$ is unique, in the sense that if $p_0$ and $q_0$ are positive integers, $P^{2p_0}/Q^{2q_0}$ is a bounded rational function on some $B_{r_2}(\vec x_0)$, and its extension as in \mbox{\rm{(\ref{eq5.1})}} is discontinuous on $B_r$ for all $r\in(0,r_2]$, then $p_0/q_0=p/e$.
\end{thm}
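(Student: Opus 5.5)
We use the \L ojasiewicz exponent $\beta_0$ from Proposition \ref{prop6.1}, applied to $P,Q$ at $\vec x_0=\vec0$. Choose $\rho>0$ with $\overline B_\rho(\vec0)\subseteq\Delta$. Then $\vv(Q)\cap\overline B_\rho\subseteq\vv(P)$, so (\L2)--(\L4) give $\beta_0=\glb\Bb\in\qu$ and $\beta_0\in\Bb$.

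First we check that $\beta_0>0$. Since $Q(\vec0)=0$ we have $0\notin\Bb$, and $\beta_0\in\Bb$, so $\beta_0\neq0$. Write $\beta_0=p/e$ with positive integers $p,e$.

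Next, boundedness. By (\L4) there are $r_1,C$ with $|P|^{p/e}\le C|Q|$ on $B_{r_1}$. Raising to the power $2e$ gives $P^{2p}\le C^{2e}Q^{2e}$, so $P^{2p}/Q^{2e}$ is bounded on every $B_r$ with $r\le r_1$. We may shrink $r_1\le\rho$.

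Now discontinuity, by contradiction. Suppose that for some $r\le r_1$ the extension $s$ in (\ref{eq5.1}) is continuous on $B_r$. Lemma \ref{lem3.3}, applied to the rational function $P^{2p}/Q^{2e}$ (numerator $P^{2p}$, denominator $Q^{2e}$), gives a rational $\varphi>0$ such that $P^{2p}/|Q|^{2e(1+\varphi)}$ extends continuously by $0$. This extension is bounded near $\vec0$, say by $C'$ on a smaller ball. Hence $|P|^{2p}\le C'|Q|^{2e(1+\varphi)}$, so $|P|^{\beta}\le C''|Q|$ with
$$\beta=\frac{p}{e(1+\varphi)}<\beta_0.$$
Thus $\beta\in\Bb$ with $\beta<\glb\Bb$, a contradiction. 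Therefore $s$ is discontinuous on every $B_r$ with $0<r\le r_1$.

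Finally, uniqueness. Suppose $P^{2p_0}/Q^{2q_0}$ is bounded on $B_{r_2}$ and its extension is discontinuous on every $B_r$ with $r\le r_2$.
\begin{itemize}
\item Boundedness gives $|P|^{p_0/q_0}\le C|Q|$ near $\vec0$, so $p_0/q_0\in\Bb$ and hence $p_0/q_0\ge\beta_0$.
\item Suppose $p_0/q_0>\beta_0$. Write $|P|^{2p_0}/|Q|^{2q_0}=\bigl(|P|^{\beta_0}/|Q|\bigr)^{2q_0}\cdot|P|^{2p_0-2q_0\beta_0}$. The first factor is bounded by (\L4). The second factor is continuous and tends to $0$ on $\vv(Q)\cap B_r\subseteq\vv(P)$, because its exponent is positive.
\item Hence the extension by $0$ is continuous at points of $\vv(Q)$ on a small ball, exactly as in the proof of Lemma \ref{lem3.4}. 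This contradicts the assumed discontinuity, so $p_0/q_0=\beta_0=p/e$.
\end{itemize}

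The main obstacle is the discontinuity step. It rests entirely on the deep facts (\L3)/(\L4), that the infimum $\beta_0$ is attained and rational; without attainment, $P^{2p}/Q^{2e}$ need not even be bounded at the critical exponent. Once those are granted, Lemma \ref{lem3.3} supplies the key openness: continuity of the extension at an exponent would survive a slight increase of the exponent of $|Q|$, pushing an element of $\Bb$ below $\beta_0$.
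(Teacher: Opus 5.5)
Your proof is correct and is essentially the paper's Case~2. You take the attained rational exponent $\beta_0=p/e$ from (\L3)/(\L4) to get boundedness, and, if $s$ were continuous, you use Lemma~\ref{lem3.3} to produce the element $p/(e(1+\varphi))<\beta_0$ of $\Bb$, contradicting minimality. Your uniqueness step differs only mildly from the paper's: you factor through the attained exponent $\beta_0$ using (\L4), whereas the paper interpolates a rational $p_1/q_1\in\Bb$ below $p_0/q_0$ and applies Lemma~\ref{lem3.4}, which avoids (\L1)--(\L4) and so shows that a bounded discontinuous $P^{2p_0}/Q^{2q_0}$ by itself forces (\L3) and (\L4).
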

\begin{proof}
  Case 1.  $\vec 0$ is an isolated point of $\vv(Q)$, so that there is some open set $\Delta$ with $\{\vec0\}=\Delta\cap\vv(Q)\subseteq\Delta\cap\vv(P)$

  Let $B_R(\vec0)$ be some open ball with closure contained in $\Delta$.
    Consider the map $\Gamma:\re^n\to\re^{2}$ defined by $$\Gamma(\vec x)=((P(\vec x))^2,(Q(\vec x))^2).$$     The closed ball ${\overline B}_R$ has image $\Gamma({\overline B}_R)$ in the $(u,v)$ target plane, and the image has the following properties: $\Gamma({\overline B}_R)$ is a compact subset of $\re^2$, it is a subset of the closed first quadrant, it is a semi-algebraic set (\cite{bcr} Prop.\ 2.2.7), it contains the origin $(P^2(\vec0),Q^2(\vec0))=(0,0)$ as well as infinitely many other points, so it has dimension $1$ or $2$, and because ${\overline B}_R\cap\vv(Q)\subseteq{\overline B}_R\cap\vv(P)$, there are no points of the form $(u,0)$ in the image with $u\ne0$.  
    $\Gamma({\overline B}_R)$ contains its boundary set, denoted $\partial\Gamma({\overline B}_R)$.

  Lemma \ref{lem2.10} applies to $\Phi=P^2$ and $\Upsilon=Q^2$ on ${\overline B}_R$, so that there is some interval $[0,u_1]$ and a function $\gamma:[0,u_1]\to\re$ defined by: \begin{equation}\label{eq10}
      \gamma(u)=\min\{Q^2(\vec x):\vec x\in{\overline B}_R\mbox{\ and\ }(P(\vec x))^2=u\}.
    \end{equation}
    By Lemma \ref{lem2.10}, $\displaystyle{\lim_{u\to0^+}\gamma(u)=0}$.  By construction, each point $(u,v)$ on the graph of $\gamma$ is in the image $\Gamma({\overline B}_R)$; either $(u,v)=(0,0)$, or $u>0$ and $v=\gamma(u)>0$ is the minimum value $(Q(\vec x_2))^2$ on the set $\{P^2=u\}$, for some $\vec x_2\in{\overline B}_R$ with $(P(\vec x_2))^2=u$ so $(u,v)=\Gamma(\vec x_2)$.  Also, $(u,v)=((P(\vec x_2))^2,(Q(\vec x_2))^2)$ is a boundary point in the image because $v$ is a minimum value of $Q^2$.  This shows that the intersection of the open first quadrant and $\partial\Gamma({\overline B}_R)$ is infinite and one-dimensional.

    Proposition \ref{lem2.11} applies to $\partial\Gamma({\overline B}_R)$.  There is a small box $(0,\delta)\times(0,\eta)$ and there are $K\ge1$ functions so that $\partial\Gamma({\overline B}_R)\cap(0,\delta)\times(0,\eta)$ is the union of $K$ disjoint graphs of continuous functions.  By the Intermediate Value Theorem, one of these functions, which can be labeled $b_1$, satisfies $b_1(u)\le b_k(u)$ for all $0<u<\delta$.  The values of $\gamma(u)$ are also the lowest $v$-values of points on $\partial\Gamma({\overline B}_R)$, so we can conclude that for $0<u<u_0=\min\{u_1,\delta\}$, $\gamma(u)=b_1(u)$.  In particular, although $\gamma$ may be discontinuous on some larger interval, because $\gamma$ coincides with $b_1$, $\gamma$ is continuous on $[0,u_0)$ and real analytic on $(0,u_0)$ with $e=e_1$ as in Proposition \ref{lem2.11} and a Puiseux expansion $$\gamma(u)=\sum_{j=p}^{\infty}\gamma_ju^{j/e}=\gamma_pu^{p/e}+\gamma_{p+1}u^{(p+1)/e}+\cdots$$ for some lowest degree $p>0$ with $\gamma_p\ne0$.

    The composite $(\gamma(u))^{e/p}$ is increasing on $[0,u_0)$, with derivative on $(0,u_0)$:
    \begin{eqnarray*}
        &&\frac ep(\gamma(u))^{(e/p)-1}\gamma^\prime(u)\\
        &=&\frac ep\left(\left(\gamma_pu^{p/e}+\gamma_{p+1}u^{(p+1)/e}+\cdots\right)^{e-p}\left(\frac pe\gamma_p u^{(p/e)-1}+\cdots\right)^p\right)^{1/p}.\\
    \end{eqnarray*}
    In particular, $\displaystyle{\lim_{u\to0^+}\frac d{du}\left((\gamma(u))^{e/p}\right)=\gamma_p^{e/p}>0}$.
    So if we pick some constants $0<K_1<\gamma_p^{e/p}<K_2$, then there is some $\delta^\prime>0$ so that for $0<u<\delta^\prime$, $$K_1u<(\gamma(u))^{e/p}<K_2u.$$  Using the continuity of $P$, corresponding to $\delta^\prime$ there is some ball $B_{r_1}$ as claimed so that $\overline{B}_{r_1}\subseteq\Delta_2$ and if $\vec x\in B_{r_1}$ then $(P(\vec x))^2<\delta^\prime$.  So for such $\vec x$ with $0<u=(P(\vec x))^2<\delta^\prime$, the functions $P$ and $Q$ satisfy:
    \begin{eqnarray*}
      \frac{(P(\vec x))^{2p}}{(Q(\vec x))^{2e}}&=&\left(\frac{(P(\vec x))^2}{(Q(\vec x))^{2e/p}}\right)^p\le\left(\frac u{(\gamma(u))^{e/p}}\right)^p<\left(\frac u{K_1u}\right)^p=K_1^{-p}.
\end{eqnarray*}
    If $P(\vec x)=0$ then $P^{2p}/Q^{2e}$ is either $0$ or undefined, so we can conclude $P^{2p}/Q^{2e}$ is bounded on the intersection of its domain with $B_{r_1}$.  Up to this point the Proof has followed the construction of the Proof of Theorem 6.4 in \cite{bm}, and has used only ${\overline B}_R\cap\vv(Q)\subseteq{\overline B}_R\cap\vv(P)$.

    For any number $\zeta>e/p$, points $(u,\gamma(u))$ with $0<u<\delta^\prime$ satisfy:
    $$\frac{u}{(\gamma(u))^\zeta}>\frac u{((K_2u)^{p/e})^\zeta}=\frac1{K_2^{\zeta p/e}}\frac1{u^{\zeta p/e-1}}\implies \lim_{u\to0^+}\frac u{(\gamma(u))^\zeta}=+\infty.$$

    Toward drawing a conclusion about $P$ and $Q$ near $\vec0\in\re^n$, choose any sequence $u_k\to0^+$, so that then $\gamma(u_k)\to0^+$ and $(u_k,\gamma(u_k))$ converges to $(0,0)$ in $\re^2$.  We can then consider an inverse image sequence in ${\overline B}_R$ by choosing, for each $k$, a point $\vec w_k$ in $\Gamma^{-1}(\{(u_k,\gamma(u_k))\})\cap{\overline B}_R$.  Then $(P(\vec w_k))^2=u_k$, $(Q(\vec w_k))^2=\gamma(u_k)$, and \begin{equation}\label{eq2.1}
      \frac{(P(\vec w_k))^2}{(Q(\vec w_k))^{2\zeta}}=\frac{u_k}{(\gamma(u_k))^{\zeta}}\to+\infty,
    \end{equation}
    so $P^{2p}/Q^{2p\zeta}$ is unbounded on ${\overline B}_R$.  However, while such a sequence $\vec w_k\in{\overline B}_R$ is bounded, it does not follow that any of the points $\vec w_k$ is close to $\vec0\in\re^n$ and we cannot conclude anything about $P$, $Q$ on smaller neighborhoods of the origin.

    We remark that everything so far in this construction depends on the initial choice of the ball $B_R$ in $\re^n$.  If $B_R$ is replaced by a smaller ball, then the image $\Gamma(\overline{B}_R)$ is replaced by some subset, and some values $\gamma(u)$ could increase (being defined as the minimum over a smaller set).  The above calculations do not rule out the possibility that the exponents $e$ and $p$, or the constants $K_1$, $K_2$, $\delta^\prime$, $r_1$, may change depending on $R$.  In general, the properties of the image of a small neighborhood under polynomial maps can depend strongly on the size of the neighborhood (see \cite{jt}).

    So, keeping the initially chosen $B_R$ and the subsequently constructed $B_{r_1}$ both fixed, we now start using the hypothesis that the zero set of $Q$ in ${\overline B}_R$ is the isolated point $\{\vec0\}$.  Statement (\L1) from Proposition \ref{prop6.1} applies to the polynomials $|\vec x|^2$ and $Q(\vec x)$  ---  there exists a rational number $L>0$ and real number $c>0$ so that for all $\vec x\in\overline{B}_R$, $|\vec x|^L\le c|Q(\vec x)|$.  The points $\vec w_k$ in the above sequence satisfy $|\vec w_k|^L\le c|Q(\vec w_k)|$, and $$|\vec w_k|\le c^{1/L}((Q(\vec w_k))^2)^{1/(2L)}=c^{1/L}(\gamma(u_k))^{1/(2L)},$$ and so $\displaystyle{\lim_{k\to\infty}\vec w_k=\vec0}$.

    
    
    Because as $k\to\infty$, LHS (\ref{eq2.1}) approaches $\infty$ along the sequence $\vec w_k\to\vec0$, we can conclude $P^{2p}/|Q|^{2q^\prime}$ is unbounded on any neighborhood $B_r$ for any real exponent $q^\prime=\zeta p>e$.  It then follows from Lemma \ref{lem3.3} (which also used (\L1)) that  $P^{2p}/Q^{2e}$ cannot be extended to a continuous function of the form $s(\vec x)$ with $s(\vec0)=0$ as claimed.

Case 2.  From Proposition \ref{prop6.1} statements (\L3) and (\L4), there exists a minimal rational exponent $0<\beta_0\in\Bb$, so that there exist $r_1>0$ and $C>0$ with 
\begin{equation}\label{eq5.5}
  |P(\vec x)|^{\beta_0}\le C\left|Q(\vec x)\right|
\end{equation}
on $B_{r_1}$.  Let $p$ and $e$ be any positive integers with $\beta_0=p/e$, then: $|P(\vec x)|^{2p}\le C^{2e}\left|Q(\vec x)\right|^{2e}$.  This shows $s(\vec x)$ is bounded as claimed.

Suppose toward a contradiction that $s(\vec x)$ is continuous on some ball $B_r$ with $r\le r_1$.  Then Lemma \ref{lem3.3} (using (\L1)) applies: there is some rational $\varphi>0$ so that 
$$s_{\varphi}(\vec x)=\left\{\begin{array}{ll}{\displaystyle{\frac{(P(\vec x))^{2p}}{(Q^{2e})^{1+\varphi}}}} & \mbox{\rm{for} $Q(\vec x)\ne0$}\\ & \\0& \mbox{\rm{for} $Q(\vec x)=0$} \end{array}\right.$$
is continuous, and bounded by some constant $C_\varphi>0$ on some possibly smaller ball $B_{r^\prime}$.
It follows that for $\vec x\in B_{r^\prime}\setminus\vv(Q)$, 
\begin{equation}\label{eq5.6}
|P(\vec x)|^{2p/(2e(1+\varphi))}\le C_\varphi^{1/(2e(1+\varphi))}|Q(\vec x)|.
\end{equation}
If $Q(\vec x)=0$ then both sides of the inequality (\ref{eq5.6}) are $=0$ by the inequality (\ref{eq5.5}), so the inequality (\ref{eq5.6}) holds for all $\vec x\in B_{r^\prime}$.  This contradicts the minimality of $\beta_0$.

The following proof of uniqueness refers to the set $\Bb$ from Proposition \ref{prop6.1}, but does not use any of the claims (\L1) -- (\L4) from Proposition \ref{prop6.1}, and in fact shows that (\L3) and (\L4) for polynomials $P$ and $Q$ follow from the existence of a bounded, discontinuous rational function of the form $P^{2p_0}/Q^{2q_0}$.

If $P^{2p_0}/Q^{2q_0}$ is bounded by some constant $C$ on some ball $B_{r_2}(\vec x_0)$ then by Lemma \ref{lem2.5}, $B_{r_2}\cap\vv(Q)\subseteq B_{r_2}\cap\vv(P)$, and for $\vec x\in B_{r_2}$, $P^{2p_0}\le CQ^{2q_0}$, and the fraction $p_0/q_0$ is in the set $\Bb\subseteq[0,\infty)$.  Let $\beta_0$ be the real number $\glb\Bb$, so $\beta_0\le p_0/q_0$.  Now assume $P^{2p_0}/Q^{2q_0}$ has an extension $s_0$ as in line (\ref{eq5.1}) which is discontinuous on $B_r$ for any $0<r\le r_2$, and suppose toward a contradiction that $p_0/q_0>\beta_0$.  Then there is some real number $\beta_1\in\Bb$ with $0\le\beta_0\le\beta_1<p_0/q_0$ (otherwise $p_0/q_0$ would be the $\glb$) and there are some integers $p_1$, $q_1$ so that $\beta_1<p_1/q_1<p_0/q_0$ and $p_1/q_1\in\Bb$.  It follows that $P^{2p_1}/Q^{2p_1}$ is bounded on some $B_{r_3}(\vec x_0)$ and by Lemma \ref{lem3.4}, for any $\tau>0$, the function   $$S(\vec x)=\left\{\begin{array}{ll}{\displaystyle{\frac{(P(\vec x))^{2p_1}}{((Q(\vec x))^{2q_1})^{1-\tau}}}} & \mbox{\rm{for} $\vec x\in B_{r_3}\setminus\vv(Q)$}\\ & \\0& \mbox{\rm{for} $\vec x\in B_{r_3}\cap\vv(Q)$}\end{array}\right.$$
 is continuous on $B_{r_3}$.  For $\tau=1-\frac{p_1/q_1}{p_0/q_0}>0$, the function $(S(\vec x))^{p_0/p_1}$ is continuous on $B_{r_3}$ and, for $r_4=\min\{r_2,r_3\}$ is equal to $s(\vec x)$ on $B_{r_4}$, contradicting the discontinuity of $s$.
\end{proof}
\begin{example}\label{ex3.5}
  The polynomials $P=x^{9}y^3$ and $Q=x^{10}+y^2$ satisfy the hypotheses of Theorem \ref{lem2.9} (in Case 1.\ with $\vv(Q)=\{\vec0\}$) and the rational function $P/Q$ extends continuously to the origin.  The conclusion of Theorem \ref{lem2.9} is that there are some integer powers $p$, $q$ so that  $P^{2p}/Q^{2e}$ is a locally bounded rational function that does not extend continuously with limit $0$ at the origin.  In the same way as Example \ref{ex1.2}, along the curve $y=x^5$ the denominator is equal to $2x^{10}$.  Raising $P(x,x^5)=x^{9}x^{15}=x^{24}$ to the $\frac{10}{24}$ power would make the degrees equal, so that the (non-rational) function $\displaystyle{\frac{(x^{9}y^3)^{10/24}}{x^{10}+y^2}}$ approaches $\frac12$ along $y=\pm x^5$ but approaches $0$ along the $x$-axis and $y$-axis.  A locally bounded rational function as in Theorem \ref{lem2.9} with a discontinuous extension $s(x,y)$ is $\displaystyle{\frac{(x^{9}y^3)^{10}}{(x^{10}+y^2)^{24}}}$.
\end{example}
\begin{example}\label{ex3.4}
  Suppose $P\not\equiv0$ and $Q\not\equiv0$ are both homogeneous polynomials: there are integers $\mu\ge1$, $\nu\ge1$ so that for all $\lambda\in\re$, $\vec x\in\re^n$, $P(\lambda\vec x)=\lambda^\mu P(\vec x)$ and $Q(\lambda\vec x)=\lambda^\nu Q(\vec x)$.  Let $\kappa$ be the least common multiple of $\mu$, $\nu$, so that $\kappa=\mu \nu^\prime=\nu\mu^\prime$.  If $Q$ has an isolated zero at the origin, then for any $\vec x\ne\vec0$, the following rational function is constant on sets of the form $\{\lambda\vec x:\lambda\ne0\}$:
  \begin{equation}\label{eq3.1}
    \frac{(P(\lambda\vec x))^{\nu^\prime}}{(Q(\lambda\vec x))^{\mu^\prime}}=\frac{\lambda^{\mu\nu^\prime}(P(\vec x))^{\nu^\prime}}{\lambda^{\mu^\prime\nu}(Q(\vec x))^{\mu^\prime}}=\frac{(P(\vec x))^{\nu^\prime}}{(Q(\vec x))^{\mu^\prime}}.
  \end{equation}
  Further, ${(P(\vec x))^{\nu^\prime}}/{(Q(\vec x))^{\mu^\prime}}$ is continuous on the unit sphere, achieves some maximum and minimum values on the sphere, and is bounded on the interior.  If the function is non-constant on the sphere, then the expression (\ref{eq3.1}) will have different $\lambda\to0$ limits along two lines through the origin and ${(P(\vec x))^{\nu^\prime}}/{(Q(\vec x))^{\mu^\prime}}$ cannot be extended continuously to the origin.  The graph will exhibit these lines at constant heights as in the ruled surface from Example \ref{ex1.1}.  If the function is constant on the sphere: $(P(\vec x))^{\nu^\prime}/(Q(\vec x))^{\mu^\prime}\equiv c$ (as in Example \ref{ex2.9}), then it is constant on $\re^n\setminus\{\vec0\}$, and $c=0$ would contradict $P\not\equiv0$, so the expression (\ref{eq3.1}) has limit $c\ne0$ as $\lambda\to0$, but the function $s(\vec x)$ from Theorem \ref{lem2.9} is discontinuous.  
\end{example}


\section{Some complex constructions}\label{sec4}

In this Section we show how to use complex polynomials to construct bounded real rational functions.  We consider first one complex variable $z=x+iy$ in $\co=\re^2$ and then several variables $z_1,\ldots,z_m$ in $\co^m=\re^{2m}$.

\begin{example}\label{ex3.6}
  Real valued rational functions in two real variables $(x,y)$ can be constructed in the following way, which, like Example \ref{ex3.4}, is consistent with, but does not appeal to, Theorem \ref{lem2.9}.  For $z=x+iy$, let $f(z)$ and $g(z)$ be non-constant, holomorphic polynomials that both vanish at the origin $0+0i$.  For both $f$ and $g$, the zero at the origin is isolated, and they factor as $f(z)=z^\mu f_0(z)$ and $g(z)=z^\nu g_0(z)$ for multiplicities $\mu>0$, $\nu>0$ and polynomials $f_0$, $g_0$ with non-zero constant terms.  Let $\kappa=\mu \nu^\prime=\nu\mu^\prime$ as in Example \ref{ex3.4}.  The polynomial $f(z)$ and the complex conjugate $\overline{g(z)}$ satisfy, for $z$ in some deleted disk $B_R(0+0i)\setminus\{0+0i\}$ where $g(z)\ne0+0i$,
  \begin{eqnarray}
      \frac{(f(z))^{\nu^\prime}}{\left(\overline{g(z)}\right)^{\mu^\prime}}&=&\frac{z^{\mu\nu^\prime}(f_0(z))^{\nu^\prime}}{\z^{{\mu^\prime}\nu}\left(\overline{g_0(z)}\right)^{\mu^\prime}}=\left(\frac{z}{\z}\right)^{\kappa}\frac{(f_0(z))^{\nu^\prime}(g_0(z))^{\mu^\prime}}{(g_0(z))^{\mu^\prime}\left(\overline{g_0(z)}\right)^{\mu^\prime}}\nonumber\\
      &=&\left(\frac{z^2}{|z|^2}\right)^{\kappa}\frac{(f_0(z))^{\nu^\prime}(g_0(z))^{\mu^\prime}}{|g_0(z)|^{2{\mu^\prime}}}\label{eq4.12}\\
      &=&\left(\frac{z^2}{|z|^2}\right)^{\kappa}(c_0+c(z,\z)).\label{eq4.13}
  \end{eqnarray}
  The factor $f_0^{\nu^\prime} g_0^{\mu^\prime}/|g_0|^{2{\mu^\prime}}$ on line (\ref{eq4.12}) is non-vanishing and real analytic on $B_R$, so it is of the form $c_0+c(z,\z)$ with $0\ne c_0\in\co$ and $\displaystyle{\lim_{z\to0}c(z,\z)=0}$ as on line (\ref{eq4.13}).  The factor $z^{2\kappa}/|z|^{2\kappa}$ does not have a $z\to0$ limit at the origin for any $\mu$, $\nu$, but it is bounded (with norm $\equiv1$) so its product with $c(z,\z)$ has limit $0$.  We want to show that both the real and imaginary parts of (\ref{eq4.12}) have different limits as $z\to0$ along different real directions, and it is enough to show this holds for $c_0z^{2\kappa}/|z|^{2\kappa}$.  Let $c_0=R_0e^{i\phi_0}$ with $R_0>0$, and $z=re^{i\theta}$ for small $r>0$, so $$c_0z^{2\kappa}/|z|^{2\kappa}=R_0e^{i(2\theta\kappa+\phi_0)}=R_0\cos(2\theta\kappa+\phi_0)+iR_0\sin(2\theta\kappa+\phi_0).$$ 
  For $\theta_1=-\phi_0/(2\kappa)$ and $\theta_2=(\pi-\phi_0)/(2\kappa)$, $$\lim_{r\to0^+}\rp\left(c_0\frac{(re^{i\theta_1})^{2\kappa}}{|re^{i\theta_1}|^{2\kappa}}\right)=R_0+0i\ne\lim_{r\to0^+}\rp\left(c_0\frac{(re^{i\theta_2})^{2\kappa}}{|re^{i\theta_2}|^{2\kappa}}\right)=-R_0+0i.$$
  For $\theta_3=(\frac{\pi}2-\phi_0)/(2\kappa)$ and $\theta_4=(\frac{3\pi}2-\phi_0)/(2\kappa)$, $$\lim_{r\to0^+}\ip\left(c_0\frac{(re^{i\theta_3})^{2\kappa}}{|re^{i\theta_3}|^{2\kappa}}\right)=R_0+0i\ne\lim_{r\to0^+}\ip\left(c_0\frac{(re^{i\theta_4})^{2\kappa}}{|re^{i\theta_4}|^{2\kappa}}\right)=-R_0+0i.$$
  We can conclude that $f^{\nu^\prime}/\overline{g}^{\mu^\prime}$ is locally bounded on $B_R$ but both its real part $\rp(f^{\nu^\prime}/\overline{g}^{\mu^\prime})$ and its imaginary part $\ip(f^{\nu^\prime}/\overline{g}^{\mu^\prime})$,
 \begin{eqnarray*}
      \frac{(f(z))^{\nu^\prime}}{\left(\overline{g(z)}\right)^{\mu^\prime}}&=&\frac{\rp\left((f(z))^{\nu^\prime}(g(z))^{\mu^\prime}\right)}{|g(z)|^{2{\mu^\prime}}}+i\frac{\ip\left((f(z))^{\nu^\prime}(g(z))^{\mu^\prime}\right)}{|g(z)|^{2{\mu^\prime}}},
  \end{eqnarray*}
  give examples of a bounded real rational function, with an isolated zero in the denominator, that cannot extend continuously to the origin.  Example \ref{ex1.1} is exactly the case $\ip(z/(2\z))$.  Of course, not every real rational function occurs in this way.
\end{example}
 
\begin{example}\label{ex4.2}
  Let $h(z_1,\ldots,z_m)$ be a non-constant holomorphic polynomial in several variables.  Its zero set $\{\vec z:h(\vec z)=0+0i\}$ is always a non-empty real algebraic set in $\co^m=\re^{2m}$ with real dimension exactly $2m-2$.  The expression 
  \begin{equation}\label{eq4.10}
  \frac{h(z_1,\ldots,z_m)}{\left(\overline{h(z_1,\ldots,z_m)}\right)}=\frac{h^2}{|h|^2}=\frac{\rp(h^2)}{|h|^2}+i\frac{\ip(h^2)}{|h|^2}
  \end{equation}
  is no longer holomorphic, but it is bounded on $\co^m\setminus\vv(|h|^2)$ because its absolute value is $\equiv1$.  Its real and imaginary parts are bounded real rational functions on $\re^{2m}$, with $\vv(|h|^2)\subseteq\vv(\rp(h^2))$, and similarly for $\ip(h^2)$, so $\vv(|h|^2)$ is the indeterminacy set for both.  For $\vec z_0\in\co^m$ with $h(\vec z_0)=0+0i$, let $\ell(z)=\vec z_0+(c_1z,c_2z,\ldots,c_mz)$ be a complex line through $\vec z_0$ parametrized by $z$.  Because $h\not\equiv0$, there are some complex coefficients $c_1,\ldots,c_m$ so that $h(\ell(z))\not\equiv0$.  The calculations of Example \ref{ex3.6} apply to the single variable holomorphic polynomial $h\circ\ell=f=g$, with $\mu=\nu=\kappa$ and $\mu^\prime=\nu^\prime=1$, to find unequal limits of the real and imaginary parts of $h/\bar h$ along real lines contained in the image of $\ell$.  The conclusion is that $\rp(h^2)/|h|^2$ is a bounded rational function that does not extend continuously on any neighborhood of $\vec z_0$.  Further, because the exponents $\mu^\prime=\nu^\prime=1$ do not depend on $\vec z_0$, $\rp(h^2)/|h|^2$ cannot be extended continuously in any neighborhood of any point in $\vv(|h|^2)$, and similarly for $\ip(h^2)/|h|^2$.
\end{example}


\begin{example}\label{ex4.3}
For $h = z_1^2 - z_2^3$, with  $z_1 = x_1 + i y_1$, $z_2 = x_2 + i y_2$, the construction of Example \ref{ex4.2} gives these two rational functions, each of which is bounded on $\re^4$ with indeterminacy set equal to the two-dimensional complex curve $\{z_1^2-z_2^3=0+0i\}$:
\begin{eqnarray*}
 \rp(h/\bar h)&=&\frac{(x_1^2 - y_1^2 - x_2^3 + 3x_2 y_2^2)^2 - (2x_1 y_1 - 3x_2^2 y_2 + y_2^3)^2}{(x_1^2 - y_1^2 - x_2^3 + 3x_2 y_2^2)^2 + (2x_1 y_1 - 3x_2^2 y_2 + y_2^3)^2},\\
  \ip(h/\bar h)&=&\frac{2(x_1^2 - y_1^2 - x_2^3 + 3x_2 y_2^2)(2x_1 y_1 - 3x_2^2 y_2 + y_2^3)}{(x_1^2 - y_1^2 - x_2^3 + 3x_2 y_2^2)^2 + (2x_1 y_1 - 3x_2^2 y_2 + y_2^3)^2}.
\end{eqnarray*}
\end{example}
 
\section{Using the \L ojasiewicz gradient inequality}\label{sec3}

Recall from Proposition \ref{prop2.4} that when the domain is $2$-dimensional, a locally bounded real rational function $P/Q$  will either have only isolated points in the zero set $\vv(Q)$ (Examples \ref{ex1.1}, \ref{ex1.2}, \ref{ex1.3}), or a non-constant common factor $P=M_1F$, $Q=M_2F$ (Example \ref{ex2.8}), or both (Example \ref{ex2.9}).

For higher dimensions $n$, real polynomials $Q$ with isolated zeros are exceptional cases and a non-empty zero set generically has positive dimension.  This Section considers the case where $Q$ is the norm of the gradient of $P$.  Because the gradient generally has isolated zeroes, this case gives a method to construct many discontinuous, locally bounded rational functions of the form (\ref{eq2}) with an isolated zero in the denominator.  The main result, Theorem \ref{thm3.3}, applies even when the gradient has a non-isolated zero.

\begin{example}\label{ex3.2}
  Start with a polynomial $P$ with critical point $\vec x_0$ as in Proposition \ref{prop6.2}, so $P(\vec x_0)=|\vec\nabla P(\vec x_0)|=0$.  If $P$ is non-constant then the squared norm $|\vec\nabla P(\vec x)|^2$ is a polynomial $\not\equiv0$.  Take any particular number $\theta$ in the set $\mathbf\Theta$ from statement (\L5) from Proposition \ref{prop6.2}, so there is some ball $B_r(\vec x_0)$, where $\vv(|\vec\nabla P|^2)\cap B_r\subseteq\vv(P)\cap B_r$.  For $\vec x\in B_r\setminus\vv(|\vec\nabla P|^2)$ and the bound $C$ corresponding to $\theta$ from Proposition \ref{prop6.2},
\begin{equation*}
  0\le\frac{|P(\vec x)|^{2\theta}}{\left|\vec\nabla P(\vec x)\right|^2}\le C^2.
\end{equation*}
If we also assume there exist positive integers $p$, $q$, so that $p/q=\theta$, then raising to the power of $q$ gives:
\begin{equation}\label{eq2}
  0\le\frac{(P(\vec x))^{2p}}{\left|\vec\nabla P(\vec x)\right|^{2q}}\le C^{2q}.
\end{equation}
The rational function (\ref{eq2}) so constructed is bounded on $B_r$; the intersection of its indeterminacy set and $B_r$ is $\vv(|\vec\nabla P|^2)\cap B_r$.  
\end{example}
In the construction of Example \ref{ex3.2}, the denominator of (\ref{eq2}) is determined by the choice of numerator and the exponents $p$, $q$, and the denominator may have a non-trivial factor in common with the numerator, as in the following Notation.
\begin{notation}\label{not4.5}
  Given $P$, $\vec x_0$, $p$, $q$ as in the construction of Example \ref{ex3.2}, the numerator and denominator of the expression (\ref{eq2}) each have a unique factorization (up to reordering and constant multiples, and not depending on $r$) into irreducible polynomials.  Let $P^{2p}=F\cdot N$ and $\left|\vec\nabla P(\vec x)\right|^{2q}=F\cdot D$, so that $F$ is a product of all of the common factors.  If there are no common factors, then let $F=1$; if   $P^{2p}$ has $\left|\vec\nabla P(\vec x)\right|^{2q}$ as a factor, then let $D=1$.  Canceling $F$ from the numerator and denominator of the rational function (\ref{eq2}) gives  another rational function $\displaystyle{\frac ND}$, the {\underline{reduced}} form of $\displaystyle{\frac{(P(\vec x))^{2p}}{\left|\vec\nabla P(\vec x)\right|^{2q}}}$. 
\end{notation}

After such a cancellation, so that the polynomials $N$ and $D$ have no common factors and are both $\not\equiv0$, the reduced form has a possibly smaller indeterminacy set, and satisfies, for  $\vec x\in B_r\setminus\vv(|\vec\nabla P|^2)$:
$$0\le\frac{(P(\vec x))^{2p}}{\left|\vec\nabla P(\vec x)\right|^{2q}}=\frac{N(\vec x)}{D(\vec x)}\le C^{2q}.$$


\begin{thm}\label{thm3.2}
  Given a polynomial $P\not\equiv0$ with $P(\vec x_0)=|\vec\nabla P(\vec x_0)|=0$, positive integers $p$, $q$ so that  $p/q=\theta\in{\mathbf\Theta}$, and $r$, $C$, $N$ and $D$ as constructed in Example {\rm{\ref{ex3.2}}} and Notation {\rm{\ref{not4.5}}}, the rational function $N/D$ is bounded on $B_r$ and $\vv(D)\cap B_r$ has dimension $\le n-2$. 
\end{thm}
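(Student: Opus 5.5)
Two claims need proof: boundedness of $N/D$ on $B_r$, and the dimension bound on $\vv(D)\cap B_r$. For boundedness, on $B_r\setminus\vv(|\vec\nabla P|^2)$ we have $F\ne0$ and $D\ne0$, and there $N/D=P^{2p}/|\vec\nabla P|^{2q}\le C^{2q}$ by (\ref{eq2}). The remaining points are those of $B_r\setminus\vv(D)$ where $F=0$. Since $F$ divides $|\vec\nabla P|^{2q}$, these lie in $\vv(|\vec\nabla P|^2)\cap B_r$, a closed set with empty interior by \cite{bcr} Proposition 2.8.4, because $|\vec\nabla P|^2\not\equiv0$. At such a point $\vec x$ we have $D(\vec x)\ne0$, so $N/D$ is continuous near $\vec x$. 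Taking a sequence $\vec x_i\to\vec x$ from the dense complement, $N(\vec x)/D(\vec x)=\lim N(\vec x_i)/D(\vec x_i)$ is bounded by $C^{2q}$ as well. Hence $N/D$ is bounded by $C^{2q}$ on $B_r\setminus\vv(D)$.

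For the dimension claim, apply Lemma \ref{lem2.5} to $N/D$ on the open set $B_r$, giving $\vv(D)\cap B_r\subseteq\vv(N)\cap B_r$. So $\vv(D)\cap B_r$ is the indeterminacy set of $N/D$ inside $B_r$. Now apply Proposition \ref{prop2.4}, in its local form on $B_r$, to $N/D$. Since $N$ and $D$ have no common nonconstant factor, condition (1) fails. Therefore condition (2) fails: the indeterminacy set contains no ball-piece that is a smooth $(n-1)$-dimensional submanifold.

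It remains to turn ``no smooth $(n-1)$-dimensional piece'' into ``dimension $\le n-2$''. I would use the semi-algebraic stratification from \cite{bcr}. The set $\vv(D)\cap B_r$ is semi-algebraic and has empty interior since $D\not\equiv0$, so its dimension is at most $n-1$. If the dimension were exactly $n-1$, the set would contain a point near which it is a smooth $(n-1)$-dimensional submanifold, because the top-dimensional stratum of a semi-algebraic set is open in the set. That would contradict the failure of (2), so the dimension is $\le n-2$.

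The main obstacle is this last step. Proposition \ref{prop2.4} is only sketched, and it is stated globally rather than on $B_r$. The cleanest route is to argue directly. Suppose $\vv(D)\cap B_r$ contains a smooth $(n-1)$-dimensional piece $U$. By \cite{bcr} Ch.~3–4 (or \cite{dfp}), the Zariski closure of $U$ is $\vv(G)$ for some irreducible $G$ dividing $D$, with $U$ consisting of nonsingular points of $G$. Since $N$ vanishes on $U$, the real Nullstellensatz for ideals with a nonsingular real point gives $G\mid N$. This is a common factor of $N$ and $D$, which is a contradiction.
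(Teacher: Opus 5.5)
Your proposal is correct and follows the paper's proof. The same density-and-continuity argument extends the bound $C^{2q}$ from $B_r\setminus\vv(|\vec\nabla P|^2)$ to all of $B_r\setminus\vv(D)$, and the dimension claim comes from Lemma \ref{lem2.5} combined with the implication $(2)\Rightarrow(1)$ of Proposition \ref{prop2.4}. Your stratification step, passing from ``no smooth $(n-1)$-dimensional piece'' to ``dimension $\le n-2$'', makes explicit something the paper leaves implicit. In your direct variant, either shrink $U$ to avoid the singular locus of $G$, or just note that the ideal of polynomials vanishing on $U$ is a height-one prime, hence principal, and contains both $N$ and $D$.
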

\begin{proof}
  The rational function $P^{2p}/|\vec\nabla P|^{2q}$ is bounded as in line (\ref{eq2}); considering that $\vv(D)\subseteq\vv(|\vec\nabla P|^{2q})$, we just need to check that its reduced form $N/D$ is bounded on its possibly larger domain, as in Definition \ref{def2.4}.  At any point $\vec x\in B_r$ where $D(\vec x)\ne0$, as in the Proof of Lemma \ref{lem2.5} there is some sequence $\vec x_i\in B_r\setminus\vv(|\vec\nabla P|^{2q})$ converging to $\vec x$.   By the continuity of $N/D$ at $\vec x$, the limit (\ref{eq4}) exists, and it is equal to the limit (\ref{eq7}) by the choice of the $\vec x_i$ sequence, and it has the same upper bound as (\ref{eq2}):
\begin{eqnarray}
  \frac{N(\vec x)}{D(\vec x)}&=&\lim_{i\to\infty}\frac{N(\vec x_i)}{D(\vec x_i)}\label{eq4}\\
  &=&\lim_{i\to\infty}\frac{(P(\vec x_i))^{2p}}{\left|\vec\nabla P(\vec x_i)\right|^{2q}}\le C^{2q}.\label{eq7}
\end{eqnarray}
Similarly, $N(\vec x)/D(\vec x)\ge0$.

By statement $(2)$ from Proposition \ref{prop2.4}, because all common factors have been canceled (including any factor $F$ as in statement $(1)$, and possibly also other factors) the indeterminacy set of $N/D$ does not contain any point (anywhere in $\re^n$) with a neighborhood where the set is a submanifold with dimension $n-1$.  By Lemma \ref{lem2.5}, $\vv(D)\cap B_r\subseteq\vv(N)\cap B_r$, and so the indeterminacy set of $N/D$ intersects $B_r$ in $\vv(D)\cap\vv(N)\cap B_r=\vv(D)\cap B_r$.
 
\end{proof}


\begin{thm}\label{thm3.3}
  Given a polynomial $P\not\equiv0$ with $P(\vec x_0)=|\vec\nabla P(\vec x_0)|=0$, positive integers $p$, $q$ so that  $p/q=\theta\in{\mathbf\Theta}$, and $r$, $C$, $N$ and $D$ as constructed in Example {\rm{\ref{ex3.2}}} and Notation {\rm{\ref{not4.5}}}, define these functions $\re^n\to\re$,
    $$f(\vec x)=\left\{\begin{array}{ll}{\displaystyle{\frac{N(\vec x)}{D(\vec x)}}} & \mbox{\rm{for} $\vec x\not\in\vv(D)$}\\ & \\0& \mbox{\rm{for} $\vec x\in\vv(D)$} \end{array}\right.$$
and 
$$g(\vec x)=\left\{\begin{array}{ll}{\displaystyle{\frac{(P(\vec x))^{2p}}{|\vec\nabla P(\vec x)|^{2q}}}} & \mbox{\rm{for} $\vec\nabla P(\vec x)\ne\vec0$}\\ & \\0& \mbox{\rm{for} $\vec\nabla P(\vec x)=\vec0$} \end{array}\right..$$
The functions $f$ and $g$ are bounded on $B_{r}(\vec x_0)$, and further,
\begin{enumerate}
  \item If $p/q>\theta_0$, then $f(\vec x)$ and $g(\vec x)$ are equal and continuous on $B_{r^\prime}(\vec x_0)$ for some $r^\prime\le r$.

  \item If $p/q=\theta_0$, then $g(\vec x)$ is not continuous on any ball $B_{r^\prime}(\vec x_0)$.

  \item If $p/q=\theta_0$, and $\vec x_0$ is an isolated critical point of $P$, and $D(\vec x_0)=0$, then $f(x)$  is not continuous on any ball $B_{r^\prime}(\vec x_0)$.
  \end{enumerate}
\end{thm}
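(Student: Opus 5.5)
Boundedness comes first. On $B_r\setminus\vv(|\vec\nabla P|^2)$, $g$ agrees with the bounded expression (\ref{eq2}), and elsewhere it is $0$, so $0\le g\le C^{2q}$. For $f$, Theorem \ref{thm3.2} already gives $0\le N/D\le C^{2q}$ at every point of $B_r$ where $D\ne0$, and $f=0$ on $\vv(D)$. Also note $\vv(D)\subseteq\vv(|\vec\nabla P|^2)$, and off $\vv(|\vec\nabla P|^2)$ the two functions agree because $N/D=P^{2p}/|\vec\nabla P|^{2q}$ there. So $f$ and $g$ can differ only at points of $\vv(|\vec\nabla P|^2)\setminus\vv(D)$, where $g=0$ and $f=N/D$.

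For (1), suppose $p/q>\theta_0$. By (\L7), $\theta_0\in{\mathbf\Theta}$, so on some ball $B_{r''}$ we have $|P|^{\theta_0}\le C_0|\vec\nabla P|$. Set $\tau=1-\theta_0q/p>0$. Then
$$\frac{P^{2p}}{|\vec\nabla P|^{2q}}=\left(\frac{|P|^{2\theta_0 q/(1-\tau)}}{|\vec\nabla P|^{2q}}\right)\cdots$$
and it is cleaner to argue as in the uniqueness part of Theorem \ref{lem2.9}. Apply Lemma \ref{lem3.4} to the bounded function $P^{2p_1}/|\vec\nabla P|^{2q_1}$, where $p_1/q_1$ is rational in $[\theta_0,p/q)$ and lies in ${\mathbf\Theta}$. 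Raising the resulting continuous function to a suitable positive power gives exactly $g$, so $g$ is continuous on a small ball. Equivalently, $g\le C^{2q}|P|^{2q(p/q-\theta_0)/\theta_0}$-type estimates give $g\to0$ on $\vv(|\vec\nabla P|^2)$, because $P=0$ there by Proposition \ref{prop6.2}. Next, $f=g$: at a point $\vec x_1\in\vv(|\vec\nabla P|^2)\setminus\vv(D)$, $f$ is continuous (since $D\ne0$), and it equals $g$ on the dense set off $\vv(|\vec\nabla P|^2)$ (dense by \cite{bcr} Prop.\ 2.8.4 as in Lemma \ref{lem2.5}). Hence $f(\vec x_1)=\lim g=g(\vec x_1)=0$.

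For (2), let $p/q=\theta_0$ and suppose $g$ is continuous on some $B_{r'}$. The function $g$ is exactly the function $s$ of Lemma \ref{lem3.3} for the pair $P^{2p}$, $|\vec\nabla P|^{2q}$. That lemma yields $\varphi>0$ with $P^{2p}/|\vec\nabla P|^{2q(1+\varphi)}$ extending continuously by $0$, so it is bounded on a smaller ball. This gives $|P|^{\theta_0/(1+\varphi)}\le C'|\vec\nabla P|$ off $\vv(|\vec\nabla P|^2)$. On $\vv(|\vec\nabla P|^2)$ the inequality also holds, since $P=0$ there by the containment in Proposition \ref{prop6.2}. Then $\theta_0/(1+\varphi)\in{\mathbf\Theta}$, contradicting $\theta_0=\glb{\mathbf\Theta}$. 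This is the same argument as Case 2 of Theorem \ref{lem2.9}.

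For (3), take $\vec x_0$ to be an isolated critical point, so that on a small ball $\vv(|\vec\nabla P|^2)=\{\vec x_0\}$. Since $D(\vec x_0)=0$, $\vv(D)$ in that ball is also exactly $\{\vec x_0\}$. Therefore $f=g$ on the ball: both are $0$ at $\vec x_0$ and both equal $N/D$ elsewhere. Discontinuity of $f$ then follows from (2). I expect the main obstacle to be (2): checking that the hypotheses of Lemma \ref{lem3.3} are met precisely, namely continuity together with the value $0$ on $\vv(|\vec\nabla P|^{2q})$, and making the exponent bookkeeping come out to an element of ${\mathbf\Theta}$ smaller than $\theta_0$.
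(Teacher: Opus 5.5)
Your proposal is correct and follows the paper's proof essentially step for step. For (1) you use Lemma \ref{lem3.4} and then raise $S$ to a power; the paper simply takes $p_1/q_1=\theta_0$, which is available by (\L7). You then use density to get $f=g$, Lemma \ref{lem3.3} plus the minimality of $\theta_0$ for (2), and $f=g$ near an isolated critical point with $D(\vec x_0)=0$ for (3).

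There is one small slip, in your parenthetical aside for (1), which your main argument does not use. The direct estimate should read $g\le C_0^{2q}|P|^{2q(p/q-\theta_0)}$, with no division by $\theta_0$ and with the constant $C_0$ belonging to $\theta_0$.
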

\begin{proof}
  The function $f$ is bounded on $B_r$ by Theorem \ref{thm3.2}.  By construction, $0\le g(\vec x)\le f(\vec x)$ on $\re^n$, with $g(\vec x)=f(\vec x)$ for $\vec x\in(\re^n\setminus\vv(|\vec\nabla P|^2))\cup\vv(D)$.   In the special case $\vv(D)=\vv(|\vec\nabla P|^2)$, $f$ and $g$ are equal everywhere.
  
  Recall $\theta_0$ is the smallest exponent from statement (\L7) of Proposition \ref{prop6.2}, and, from (\L6), let $p_0$, $q_0$ be positive integers so that $\displaystyle{\theta_0=\frac{p_0}{q_0}}$.  With the radius $r_0$ and $C_0$ corresponding to $\theta_0$ in the definition of $\mathbf\Theta$ from (\L5) in Proposition \ref{prop6.2}, for $\vec x\in B_{r_0}$, 
\begin{equation}\label{eq4.5}
  |P(\vec x)|^{\theta_0}\le C_0|\vec\nabla P(\vec x)|.
\end{equation}
The rational function $P^{2p_0}/|\vec\nabla P|^{2q_0}$ is bounded as in line (\ref{eq2}), and Lemma \ref{lem3.4} applies, so that for any $\tau>0$,
$$S(\vec x)=\left\{\begin{array}{ll}{\displaystyle{\frac{(P(\vec x))^{2p_0}}{(|\vec\nabla P(\vec x)|^{2q_0})^{1-\tau}}}} & \mbox{\rm{for} $\vec\nabla P(\vec x)\ne\vec0$}\\ & \\0& \mbox{\rm{for} $\vec\nabla P(\vec x)=\vec0$} \end{array}\right.$$
 is continuous on $B_{r_0}$.

 For statement (1), assuming $p/q=\theta>\theta_0$, let $\displaystyle{\tau=1-\frac{\theta_0}{\theta}>0}$, so that for $\vec\nabla P(\vec x)\ne\vec0$,
 \begin{equation}\label{eq8}
   \frac{(P(\vec x))^{2p_0}}{(|\vec\nabla P(\vec x)|^{2q_0})^{1-\tau}}=\frac{(P(\vec x))^{2p_0}}{|\vec\nabla P(\vec x)|^{2p_0q/p}}=\left(\frac{(P(\vec x))^{2p}}{|\vec\nabla P(\vec x)|^{2q}}\right)^{p_0/p}.
   \end{equation}
This shows $g(x)=(S(\vec x))^{p/p_0}$, so $g$ is continuous on $B_{r_0}$ and we set the claimed $r^\prime=\min\{r,r_0\}$.  We want to show that $f(\vec x)=(S(\vec x))^{p/p_0}$ on $B_{r_0}$.  If $D(\vec x)=0$ then $f(\vec x)=0=(S(\vec x))^{p/p_0}$.  For a point $\vec x\in B_{r_0}$ with $D(\vec x)\ne0$, some of the following limit steps are the same as (\ref{eq4}) and (\ref{eq7}), using a sequence $\vec x_i\in B_{r_0}\setminus\vv(|\vec\nabla P|^{2q})$, the continuity of $N/D$ and $S$ at $\vec x$, and equation (\ref{eq8}).
 \begin{equation}
  f(\vec x)=\frac{N(\vec x)}{D(\vec x)}=\lim_{i\to\infty}\frac{N(\vec x_i)}{D(\vec x_i)}=\lim_{i\to\infty}\frac{(P(\vec x_i))^{2p}}{\left|\vec\nabla P(\vec x_i)\right|^{2q}}=\lim_{i\to\infty}(S(\vec x_i))^{p/p_0}=(S(\vec x))^{p/p_0}.\nonumber
\end{equation}

For statement (2), we continue to use the above notation for $\displaystyle{\theta_0=\frac{p_0}{q_0}}$.  The following argument is the same as Case 2.\ from Theorem \ref{thm3.3}, using (\L7).  Suppose toward a contradiction that $g(\vec x)$ is continuous on some ball $B_{r^{\prime}}$.  Then Lemma \ref{lem3.3} applies: there is some rational $\varphi>0$ so that 
$$g_{\varphi}(\vec x)=\left\{\begin{array}{ll}{\displaystyle{\frac{(P(\vec x))^{2p_0}}{(|\vec\nabla P(\vec x)|^{2q_0})^{1+\varphi}}}} & \mbox{\rm{for} $\vec\nabla P(\vec x)\ne\vec0$}\\ & \\0& \mbox{\rm{for} $\vec\nabla P(\vec x)=\vec0$} \end{array}\right.$$
is continuous, and bounded by $C_\varphi$ on some possibly smaller ball $B_{r^{\prime\prime}}$.
It follows that for $\vec x\in B_{r^{\prime\prime}}\setminus\vv(|\vec\nabla P(\vec x)|^2)$, 
\begin{equation}\label{eq4.6}
|P(\vec x)|^{2p_0/(2q_0(1+\varphi))}\le C_\varphi^{1/(2q_0(1+\varphi))}|\vec\nabla P(\vec x)|.
\end{equation}
If $\vec\nabla P(\vec x)=\vec0$ then both sides of the inequality (\ref{eq4.6}) are $=0$ by the inequality (\ref{eq4.5}), so the inequality (\ref{eq4.6}) holds for all $\vec x\in B_{r^{\prime\prime}}$.  This contradicts the minimality of $\theta_0$.

For statement (3), if there is some $r^{\prime\prime}$ so that $\vv(|\vec\nabla P|^2)\cap B_{r^{\prime\prime}}=\{\vec x_0\}$, then $\vv(|\vec\nabla P|^2)\cap B_{r^{\prime\prime}}=\vv(D)\cap B_{r^{\prime\prime}}$ and so $f=g$ on $B_{r^{\prime\prime}}$.  Given any $B_{r^\prime}$, $g$ is not continuous on $B_{r^\prime}\cap B_{r^{\prime\prime}}$ by (2) and $f$ is not continuous on $B_{r^\prime}$. 
%
%
\end{proof}
\begin{rem}\label{rem7.5}
    The hypothesis in Statement (3) of Theorem \ref{thm3.3} is not too restrictive, considering that for generic $P$, the set of critical points is a finite set of isolated points and contains $\vv(D)$.  However, the extra assumption $D(\vec x_0)=0$ is needed, as shown by Example \ref{ex4.5}.   If $\vv(D)\cap B_{r^{\prime\prime}}=\mbox{\O}$, then $f$ is a continuous, nonvanishing rational function on $B_{r^{\prime\prime}}$.  The case not addressed by Theorem \ref{thm3.3} is whether $f$ is continuous for $p_0/q_0=\theta_0$, $\vec x_0\in\vv(D)\subsetneq\vv(|\vec\nabla P|^2)$, $\dim\vv(|\vec\nabla P|^2)>0$.
\end{rem}

\begin{example}\label{ex4.4}
  The polynomial $P=xy$ satisfies $\vec\nabla P=(y,x)$, $|\vec\nabla P|^2=x^2+y^2$, and at the unique critical point $\vec x_0=\vec0$, $\theta_0=\frac12$.  The rational function $$\frac P{|\vec\nabla P|^2}=\frac{xy}{x^2+y^2}$$ matches Example \ref{ex1.1}; it is bounded and does not have any continuous extension near $\vec0$.  For a rational number $\theta=p/q$, the polynomials $P^{2p}$ and $|\vec\nabla P|^{2q}$ do not have any common factors, $D=(x^2+y^2)^q$ has an isolated zero, and for $\theta\ge\frac12$ (equivalently, $2p\ge q>0$), $$\frac ND=\frac{(xy)^{2p}}{(x^2+y^2)^{q}}$$ is bounded.  $N/D$ extends continuously to $f$ as in Theorem \ref{thm3.2} if and only if $\theta>\frac12$.
\end{example}
\begin{example}\label{ex4.5}
    The polynomial $P=x^2+y^2$ satisfies $\vec\nabla P=(2x,2y)$, $|\vec\nabla P|^2=4(x^2+y^2)$, and $\theta_0=\frac12$ at $\vec0$.  The rational function $$\frac P{|\vec\nabla P|^2}=\frac{x^2+y^2}{4(x^2+y^2)}$$ is a scalar multiple of  Example \ref{ex2.9}.  This is also an instance of Example \ref{ex3.4}.  For integers $p$, $q$ with $2p\ge q>0$, the construction from Example \ref{ex3.2} and Notation \ref{not4.5} gives $$\frac ND=\frac{(x^2+y^2)^{2p-q}}{4^q}$$ so the function $f=\frac ND$ from Theorem \ref{thm3.2} is a polynomial, and in particular continuous, but with limit $0$ only for $2p>q$.  For $2p=q>0$, the function $g$ from Theorem \ref{thm3.2}, $$g(\vec x)=\left\{\begin{array}{ll}{\displaystyle{\frac{(x^2+y^2)^{q}}{4^q(x^2+y^2)^{q}}}} & \mbox{\rm{for} $(x,y)\ne\vec0$}\\ & \\0& \mbox{\rm{for} $(x,y)=\vec0$} \end{array}\right.$$  is not continuous on any ball $B_{r^\prime}$.
\end{example}
\begin{example}\label{ex4.6}
    The polynomial $P=x^5y^2$ satisfies $\vec\nabla P=(5x^4y^2,2x^5y)$, $|\vec\nabla P|^2=25x^8y^4+4x^{10}y^2$.  The \L ojasiewicz exponent at $\vec0$ is $\theta_0=\frac67$.  For integers $p$, $q$ with $7p\ge 6q>0$, the construction from Example \ref{ex3.2} gives bounded rational functions,
    \begin{eqnarray}
        \frac{P^{2p}}{|\vec\nabla P|^{2q}}&=&\frac{(x^{10}y^4)^p}{(x^8y^2(4x^2+25y^2))^q},\label{eq4.1}\\
        \frac ND&=&\frac{x^{10p-8q}y^{4p-2q}}{(4x^2+25y^2)^q}.\label{eq4.2}
    \end{eqnarray}  These numerators and denominators are homogeneous as in Example \ref{ex3.4}, and this $P$ has the exceptional property that the set of critical points is a $1$-dimensional set; the indeterminacy set of (\ref{eq4.1}) is $\{xy=0\}$.  After the cancellation to get the expression (\ref{eq4.2}), $\vv(D)$ contains only the isolated point at the origin, as in Theorem \ref{thm3.2}.  For $p/q=6/7$, (\ref{eq4.1}) and (\ref{eq4.2}) are locally bounded but the extensions $f$ and $g$ are equal to each other and are not continuous.
\end{example}


\begin{example}\label{ex4.7}
    The polynomial $P=x^2-y^3$ is irreducible and satisfies $\vec\nabla P=(2x,-3y^2)$, and $|\vec\nabla P|^2=4x^2+9y^4$ has an isolated zero, as in Example \ref{ex1.2} and Example \ref{ex3.5}.  The \L ojasiewicz exponent at $\vec0$ is $\theta_0=\frac23$.  For integers $p$, $q$ with $3p=2q>0$, the construction from Example \ref{ex3.2} gives locally bounded rational functions,
    \begin{equation*}
        \frac{P^{2p}}{|\vec\nabla P|^{2q}}=\frac ND=\frac{(x^2-y^3)^{2p}}{(4x^2+9y^4)^q}.
    \end{equation*} 
    It is easy to check (by finding limits along the axes or plotting a graph) that the extension $f=g$ of the rational function $$\displaystyle{\frac{(x^2-y^3)^{4}}{(4x^2+9y^4)^3}}$$ is not continuous in any neighborhood of the origin.  The graph will also show the property of local boundedness, but we can check with some elementary algebra.  For $x=0$, $y\ne0$, $g(0,y)=9^{-3}$.  For $x\ne0$, $y=0$, $g(x,0)=x^2/4^3$.  For $x\ne0$ and $y\ne0$,
    \begin{eqnarray*}
        (g(x,y))^{1/4}&=&\frac{|x^2-y^3|}{(4x^2+9y^4)^{3/4}}\le\frac{x^2}{4^{3/4}|x|^{3/2}}+\frac{|y|^3}{9^{3/4}|y|^3}.
    \end{eqnarray*}
\end{example}

\begin{thm}\label{ex4.11}
  For any real algebraic set $A$ strictly contained in a hyperplane, and any $\vec x_0\in A$, there exist $r>0$ and a rational function $N/D$, so that the indeterminacy set of $N/D$ is equal to $A$, and as in Theorem {\rm{\ref{thm3.3}}}, $N$ and $D$ have no common factors, $N/D$ is bounded on $B_r(\vec x_0)$, and the extension $f(\vec x)$ of $N/D$ is not continuous on any neighborhood of $\vec x_0$.
\end{thm}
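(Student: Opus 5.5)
The plan is a direct construction rather than an application of Theorem \ref{thm3.3}: build $N/D$ explicitly from a single polynomial cutting out $A$, and check boundedness, absence of common factors, the indeterminacy set, and discontinuity in turn.

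\textbf{Setup and construction.} After an affine change of coordinates, which preserves polynomials, rational functions, boundedness and continuity, assume the hyperplane is $\{x_n=0\}$. Then $A\subsetneq\{x_n=0\}$ and $\vec x_0=(a_1,\ldots,a_{n-1},0)$. By Notation \ref{not1.2}, choose a polynomial $G$ with $\vv(G)=A$, and set
$$N=x_n^2,\qquad D=x_n^2+G^2 .$$
Since $A\subseteq\{x_n=0\}$, we have $\vv(D)=\{x_n=0\}\cap\vv(G)=A$. Also $\vv(N)\cap\vv(D)=A$, so the indeterminacy set of $N/D$ is exactly $A$. Boundedness is immediate: $0\le N/D\le 1$ on $\re^n\setminus A$, so $N/D$ is bounded on every ball, and in particular on $B_r(\vec x_0)$ for any $r>0$.

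\textbf{No common factors.} This is the step needing the hypothesis that $A$ is strictly contained in the hyperplane. The only irreducible factor of $N$ is $x_n$, up to constants. If $x_n$ divided $D$, then $D$ would vanish on all of $\{x_n=0\}$. That would give $\vv(D)\supseteq\{x_n=0\}$ and hence $A=\{x_n=0\}$, contradicting strict containment. So $N$ and $D$ have no nonconstant common factor, and $N/D$ is already in reduced form. This matches the setting of Theorem \ref{thm3.2}, and consistently with it, $A$ has dimension $\le n-2$.

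\textbf{Discontinuity at $\vec x_0$.} Let $f$ be the extension of $N/D$ by $0$ on $\vv(D)=A$, so $f(\vec x_0)=0$. Approach $\vec x_0$ along the normal line $\vec x_0+t\vec e_n$ with $t\ne0$; these points have $x_n=t\ne0$, so they lie outside $A$. Since $G(\vec x_0)=0$, Taylor expansion gives $G(\vec x_0+t\vec e_n)=ct+O(t^2)$ with $c=\partial G/\partial x_n(\vec x_0)$. Hence
$$f(\vec x_0+t\vec e_n)=\frac{t^2}{t^2+(ct+O(t^2))^2}\longrightarrow\frac1{1+c^2}>0 \quad\text{as } t\to0,$$
whereas $f(\vec x_0)=0$. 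So $f$ is not continuous on any neighborhood of $\vec x_0$, and any $r>0$ works in the statement.

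The main obstacle I expect is the common-factor check, since it is the only place where strict containment in the hyperplane is used. Squaring $G$ is what guarantees that $D$ vanishes to second order along the normal direction, matching $N=x_n^2$, so that the limit along the normal line is a positive constant rather than $0$.
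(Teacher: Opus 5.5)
Your proof is correct, and it takes a different and much more elementary route than the paper. The paper writes $A=\vv(P)$ with $P=Q^2+x_n^2$ and $Q=Q(x_1,\ldots,x_{n-1})\not\equiv0$; strict containment is what gives $Q\not\equiv0$. It checks that $P$ and $|\vec\nabla P|^2=4Q^2|\vec\nabla Q|^2+4x_n^2$ are irreducible and, after possibly rescaling $Q$, not proportional, so they have no common factor. It then takes $N/D=P^{2p}/|\vec\nabla P|^{2q}$ with $p/q=\theta_0$ the \L ojasiewicz gradient exponent at $\vec x_0$. Boundedness comes from (\L7), and discontinuity from Theorem \ref{thm3.3}(2), i.e.\ ultimately from Lemma \ref{lem3.3} and the minimality of $\theta_0$.

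Your quotient $x_n^2/(x_n^2+G^2)$ bypasses all of that machinery. Boundedness is the trivial bound $0\le N/D\le 1$. Coprimality is a one-line divisibility argument, using $A\neq\{x_n=0\}$ at exactly the analogous point. Discontinuity is the explicit limit $1/(1+c^2)$ along the normal line. Your construction also gives more: a single function, bounded on all of $\re^n$, is discontinuous at every point of $A$, and any $r$ works. By contrast, the paper's $\theta_0$, $N/D$ and $r$ all depend on $\vec x_0$. Its function is also only locally bounded, because $\vv(|\vec\nabla P|^2)$ contains the points of $\{x_n=0\}\cap\vv(|\vec\nabla Q|^2)$ outside $A$, where $P\neq0$.

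What the paper's route buys is that the witness has the specific form $P^{2p}/|\vec\nabla P|^{2q}$ at the critical exponent. In context, this shows that the gradient construction of Section \ref{sec3} itself can realize every such indeterminacy set. If ``as in Theorem \ref{thm3.3}'' is read as requiring $N/D$ to come from that construction, your example does not supply it, although it does establish the statement as literally written.
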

\begin{proof}
  For $n\ge2$, we choose coordinates so that the hyperplane is $\{(x_1,\ldots,x_{n-1},0)\}$.  The real algebraic set $A$ of the hypothesis is of the form $\vv(P)$, for $P(\vec x)=Q^2+x_n^2$ and $Q=Q(x_1,\ldots,x_{n-1})\not\equiv0$.  A short calculation of the squared norm of the gradient gives:
  \begin{equation}\label{eq4.11}
    |\vec\nabla P|^2=4Q^2|\vec\nabla Q|^2+4x_n^2.
  \end{equation}
  We note that both $P$ and $|\vec\nabla P|^2$ are irreducible polynomials in the polynomial ring $\re[\vec x]$, so they have no non-constant polynomial common factor unless one is a polynomial multiple of the other.  This can only happen when one is a scalar multiple of the other and $Q$ is a linear polynomial with $|\vec\nabla Q|^2\equiv1$ (as in Example \ref{ex4.5}), and this case can be avoided by replacing $Q$ with $2Q$, which does not change the set $A$.

  If $P(\vec x)=0$ then $Q(x_1,\ldots,x_{n-1})=0$ and $x_n=0$, so by Equation (\ref{eq4.11}), $|\vec\nabla P|^2=0$, and $\vv(P)\subseteq\vv(|\vec\nabla P|^2)$. 
  
    For any $p$ and $q$ so that $p/q=\theta_0$ is the \L ojasiewicz exponent for $P$ at $\vec x_0$, there is some $r>0$ so that the rational function \begin{equation}\label{eq4.14}
    \displaystyle{\frac{(P(\vec x))^{2p}}{|\vec\nabla P(\vec x)|^{2q}}}=\frac{(Q^2+x_n^2)^{2p}}{(4Q^2|\vec\nabla Q|^2+4x_n^2)^q}=\frac{N(\vec x)}{D(\vec x)}
    \end{equation}
    is bounded on $B_{r}(\vec x_0)$ and has indeterminacy set $A$ (with dimension $\le n-2$ and $\vv(D)\cap B_r=A\cap B_r$ as in Theorem \ref{thm3.2}).  By construction, $(P(\vec x))^{2p}$ and $|\vec\nabla P(\vec x)|^{2q}$ have no common factors.  The extension $f(\vec x)=g(\vec x)$ from Theorem \ref{thm3.3} is not continuous on any neighborhood of $\vec x_0$.  We emphasize that the number $\theta_0$, and, consequently, the radius $r$, the expression (\ref{eq4.14}), and the extension $f$, all depend on the point $\vec x_0$ and the choice of defining function $Q$, but the indeterminacy locus of the expression (\ref{eq4.14}) in $\re^n$ does not.
\end{proof}

\begin{example}\label{ex5.12}
  As in Example \ref{ex4.7}, let $Q(x,y)=x^2-y^3$, so the construction from Theorem \ref{ex4.11} gives irreducible polynomials:
  \begin{eqnarray*}
      P(x,y,z)&=&(x^2-y^3)^2+z^2\\
      |\vec\nabla P|^2&=&4(x^2-y^3)^2(4x^2+9y^4)+4z^2.
  \end{eqnarray*}
  The set $\vv(P)$ has codimension $2$ in $\re^3$; it is a real curve in the $xy$-plane with a cusp at the origin, and exactly equal to the set of critical points of $P$.  The \L ojasiewicz exponent of $P$ at $(0,0,0)$ is $\theta_0=5/6$, and by Theorem \ref{thm3.3}, the function $g:\re^3\to\re$
$$g(\vec x)=\left\{\begin{array}{ll}{\displaystyle{\frac{((x^2-y^3)^2+z^2)^{10}}{(4(x^2-y^3)^2(4x^2+9y^4)+4z^2)^{6}}}} & \mbox{\rm{for} $(x,y,z)\notin\vv(P)$}\\ & \\0& \mbox{\rm{for} $(x,y,z)\in\vv(P)$} \end{array}\right.$$
  is bounded on some $B_r(\vec0)$ and not continuous on any ball $B_{r^\prime}(\vec0)$.
  These limits at the origin are unequal:
  \begin{eqnarray*}
      \lim_{y\to0}g(0,y,0)&=&\lim_{y\to0}\frac{y^{60}}{((4y^6)(9y^4))^6}=\frac1{36^6},\\
      \lim_{z\to0}g(0,0,z)&=&\lim_{z\to0}\frac{z^{20}}{(4z^2)^6}=0.
  \end{eqnarray*}
  To check that $g$ is bounded in a neighborhood of $\vec0$, consider the following cases.  By construction, $g(\vec x)=0$ on $\vv(P)$.  For $(x,y,z)$ with $x^2-y^3=0$ and $z\ne0$, $g(\vec x)=z^{20}/(4^6z^{12})$. 
  For $x^2-y^3\ne0$ and $z=0$, $$g(x,y,0)=\frac{(x^2-y^3)^{8}}{4^6(4x^2+9y^4)^6},$$ which is bounded near the origin as in Example \ref{ex4.7}.  For all other $\vec x\notin\vv(P)$,
  \begin{eqnarray}
      (g(\vec x))^{1/10}&=&{\displaystyle{\frac{(x^2-y^3)^2+z^2}{(4(x^2-y^3)^2(4x^2+9y^4)+4z^2)^{6/10}}}}\nonumber\\
      &\le&\frac{(x^2-y^3)^2}{(4(x^2-y^3)^2(4x^2+9y^4))^{6/10}}+\frac{z^2}{(4z^2)^{6/10}}\nonumber\\
      &=&\frac{(x^2-y^3)^{4/5}}{4^{3/5}(4x^2+9y^4)^{3/5}}+\frac{z^{4/5}}{4^{3/5}}.\label{eq5.11}
  \end{eqnarray}
  The boundedness of the first term in line (\ref{eq5.11}) is again as in Example \ref{ex4.7}.
  
  In this example, $g$ is continuous in a neighborhood of any point $\vec x_1=(x_1,y_1,0)$ in the indeterminacy set $\vv(P)$, except the origin.  This can be explained by saying the \L ojasiewicz exponent for $P$ at $\vec x_1\ne\vec0$ is less than $\frac56$, but we can check directly.  First, $$\lim_{\vec x\to\vec x_1}4x^2+9y^4=4x_1^2+9y_1^4=L>0,$$ so there is some $\delta>0$ so that if $|\vec x-\vec x_1|<\delta$ then $4x^2+9y^4>L/2$.  Let $\Lambda=\min\{2L,4\}$.  For $\vec x$ within $\delta$ of $\vec x_1$, $g(\vec x)$ is either $0$, or $\vec x\notin\vv(P)$ and 
  \begin{eqnarray*}
      0\le g(x)&\le&{\displaystyle{\frac{((x^2-y^3)^2+z^2)^{10}}{(4(x^2-y^3)^2(L/2)+4z^2)^{6}}}}\\
      &\le&{\displaystyle{\frac{((x^2-y^3)^2+z^2)^{10}}{(\Lambda(x^2-y^3)^2+\Lambda z^2)^{6}}}}={\displaystyle{\frac{((x^2-y^3)^2+z^2)^{4}}{\Lambda^{6}}}},
  \end{eqnarray*}
  which approaches $0$ as $\vec x\to\vec x_1$.
\end{example}

\begin{proof}[Acknowledgment] 
  The graphics in Figure 1 were produced using the free plotting application at {\tt desmos.com}.
\end{proof}

%

\end{document}